\newcommand{\use}{\mathit{use}}
\newcommand{\abs}[1]{\lvert#1\rvert}
\newcommand{\la}{\langle}
\newcommand{\ra}{\rangle}
\newcommand{\restrict}{\upharpoonright}
\newtheorem{thm}{Theorem}{\bf }{\it }
\newtheorem{prop}[thm]{Proposition}{\bf }{\it }
\newtheorem{pro}[thm]{Proposition}{\bf }{\it }
{\bf }{\it }
\newtheorem{cor}[thm]{Corollary}{\bf }{\it }
\newtheorem{df}[thm]{Definition}{\bf }{\rm }
\newtheorem{rem}[thm]{Remark}{\bf }{\rm }
{\bf }{\it }
{\bf }{\it }
\newtheorem{que}[thm]{Question}{\bf }{\it }
\newenvironment{proof}{\noindent {\bf Proof.}}{\nolinebreak \ \nolinebreak
\hfill{} \nolinebreak $\square$}
\newcommand{\darrow}{\!\downarrow}
\newcommand{\restr}{\mbox{\raisebox{.5mm}{$\upharpoonright$}}}
\newcommand{\bigset}[1]{\big\{ #1 \big\}}
\renewcommand{\leq}{\leqslant}
\renewcommand{\geq}{\geqslant}
\newcommand{\A}{\mathcal{A}}
\newcommand{\COMP}{{\rm REC}}
\newcommand{\REC}{{\rm REC}}
\newcommand{\CE}{{\rm RE}}
\newcommand{\Real}{{\mathds R}}
\DeclareMathOperator{\DNR}{DNR}
\DeclareMathOperator{\Parity}{Parity}
\begin{document}

\title{Covering the recursive sets}
\author[HAWAII]{Bj\o rn Kjos-Hanssen\fnref{GRANTS}}
  \ead{bjoern.kjos-hanssen@hawaii.edu}
\author[SINGAPORE]{Frank Stephan\fnref{GRANTS}} \ead{fstephan@comp.nus.edu.sg}
\author[NIJMEGEN]{Sebastiaan A.\ Terwijn\fnref{GRANTS}} \ead{terwijn@math.ru.nl}
\address[HAWAII]{Department of Mathematics,
University of Hawaii at Manoa, Honolulu, Hawaii 96822, USA}
\address[SINGAPORE]{Department of Mathematics and
Department of Computer Science, National University of Singapore,
10 Lower Kent Ridge Road, Block S17, Singapore 119076, Republic of Singapore}
\address[NIJMEGEN]{Department of Mathematics, Radboud University Nijmegen,
P.O.\ Box 9010, 6500 GL Nijmegen, The Netherlands}
\fntext[GRANTS]{This
work was partially supported by a grant from the Simons Foundation
(Grant number 315188 to
Bj\o rn Kjos-Hanssen), the National Science Foundation (Grant number
NSF-DMS-1545707 to Bj\o rn Kjos-Hanssen) 
and by a grant from the Ministry of Education in Singapore
(number MOE2013-T2-1-062 / R146-000-184-112 to F.~Stephan).
A substantial part of the work was performed while the first and third authors
were supported by
the Institute for Mathematical Sciences of the National University of Singapore
during the workshop on \emph{Algorithmic Randomness} from 2 to 30 June 2014.
}


\begin{abstract}
\noindent
We give solutions to two of the questions
in a paper by Brendle, Brooke-Taylor, Ng and Nies.
Our examples derive from a 2014 construction by Khan and Miller as well as
new direct constructions using martingales.

At the same time, we introduce the concept of i.o.\ subuniformity and relate
this concept to recursive measure theory.
We prove that there are classes closed downwards under
Turing reducibility that have recursive measure zero and
that are not i.o.\ subuniform. This shows that there are examples
of classes that cannot be covered with methods other than
probabilistic ones. It is easily seen that every
set of hyperimmune degree can cover the recursive sets.
We prove that there are both examples of hyperimmune-free degree
that can and that cannot compute such a cover.
\end{abstract}

\begin{keyword}
Recursion Theory \sep Algorithmic Randomness \sep Schnorr trivial sets \sep
Schnorr random sets \sep diagonally nonrecursive (dnr) sets \sep infinitely often
subuniform families of sets \sep sets of hyperimmune-free degree
\MSC[2010] 03D28 Turing degree structures in Recursion Theory \sep
03D32 Algorithmic Randomness
\end{keyword}

\maketitle

\section{Introduction}

\noindent
An important theme in set theory has been the study of cardinal
characteristics. 
As it turns out, in the study of these there are certain analogies with 
recursion theory, where the recursive sets correspond to sets in the ground
model. Recently,
Brendle, Brooke-Taylor, Ng and Nies~\cite{Brendle.Brooke.ea:14}
point out analogies between cardinal characteristics and 
the study of algorithmic randomness.  
We address two questions raised in this paper that are connected to 
computing covers for the recursive sets. 

In the following, we will assume that the reader is familiar with 
various notions from computable measure theory, in particular, 
with the notions of Martin-L\"of null, Schnorr null and Kurtz null set. 
For background on these notions we refer the reader to
the books of Calude \cite{Calude},
Downey and Hirschfeldt~\cite{DH},
Li and Vit\'anyi \cite{LV} and Nies \cite{Ni09}.

Our notation from recursion theory is mostly standard, except for the
following: The natural numbers are denoted by $\omega$,
$2^\omega$ denotes the Cantor space and
$2^{<\omega}$ the set of all finite binary sequences.
${\mathbb R}^{\geq 0}$ denotes the set of
those real numbers which are not negative.
We denote the concatenation of strings $\sigma$ and $\tau$ by $\sigma\tau$.
The notation $\sigma\sqsubseteq\tau$ denotes that the finite
string $\sigma$ is an initial segment of the (finite or infinite)
string $\tau$. We identify sets $A\subseteq\omega$ with
their characteristic sequences, and $A\restr n$ denotes the
initial segment $A(0)\ldots A(n-1)$.
We use $\lambda$ to denote the empty string.
Throughout, $\mu$ denotes the Lebesgue measure on $2^\omega$.
We write $a\simeq b$ if either both sides are undefined, or they are both defined and equal.
We let $\Parity(x)=0$ if $x$ is even, and $\Parity(x)=1$ if $x$ is odd.

\begin{df}
A function $M:2^{<\omega}\rightarrow \Real^{\geq 0}$ is a \emph{martingale}
if for every $x \in 2^{<\omega}$, $M$ satisfies the averaging condition
\[
2M(\sigma) = M(\sigma0) + M(\sigma1),
\]
A martingale $M$ \emph{succeeds on} a set $A$ if
\[
\limsup_{n\rightarrow\infty} M(A\restr n)=\infty.
\]
The class of all sets on which $M$ succeeds is denoted by $S[M]$.
\end{df}

\noindent
For more background material on recursive martingales we refer the reader
to the above mentioned textbooks \cite{Calude,DH,LV,Ni09}.
The following definition is taken from
Rupprecht~\cite{Rupprecht,Rupprechtthesis}.

\begin{df}
An oracle $A$ is {\em Schnorr covering\/} if the union of all Schnorr null
sets is Schnorr null relative to $A$. An oracle $A$ is
{\em weakly Schnorr covering\/} 
if the set of recursive reals is Schnorr null relative to $A$.
For the latter, we will also say that $A$ Schnorr covers $\REC$. 
\end{df}

\begin{df}
A \emph{Kurtz test} relative to $A$ is an $A$-recursive sequence
of closed-open sets $G_i$ such that each $G_i$ has measure at most $2^{-i}$;
these closed-open sets are given by explicit finite lists of strings and
they consist of all members of $\{0,1\}^\omega$ extending one of the
strings. Note that $i \rightarrow \mu(G_i)$ can be computed relative
to $A$. The intersection of a Kurtz test (relative to $A$) is called a
Kurtz null set (relative to $A$). An oracle $A$ is \emph{Kurtz covering}
if there is an $A$-recursive array $G_{i,j}$ of closed-open sets such
that each $i$-th component is a Kurtz test relative to $A$
and every unrelativised Kurtz test describes a null-set contained in
$\cap_j G_{i,j}$ for some~$i$; $A$ is \emph{weakly Kurtz covering} if
there is such an array and each recursive sequence is contained in
some $A$-recursive Kurtz null set $\cap_j G_{i,j}$.
\end{df}

\noindent
Brendle, Brook-Taylor, Ng and Nies \cite{Brendle.Brooke.ea:14} called
the notion of (weakly) Schnorr covering in their paper (weakly) Schnorr
{\em engulfing\/}. In this paper, we will use the original terminology
of Rupprecht~\cite{Rupprecht,Rupprechtthesis}.
We have analogous notions for the other notions of effective null sets. 
As mentioned above, a set $A$ is weakly Kurtz covering if
the set of recursive reals is Kurtz null relative to~$A$.
We also have Baire category analogues of these notions of covering:
A set $A$ is {\em weakly meager covering\/} if it computes a meager set that
contains all recursive reals; more precisely, $A$ is weakly meager covering
iff there is an $A$-recursive
function $f$ mapping each binary string $\sigma$ to an extension $f(\sigma)$
such that every recursive sequence $B$ has only finitely many prefixes $\sigma$
for which $f(\sigma)$ is also a prefix of $B$.
Recall that a set $A$ is diagonally nonrecursive ($\DNR$) if there is a
function $f \leq_T A$ such that, for all $x$, if $\varphi_x(x)$ is defined
then $\varphi_x(x) \neq f(x)$. 
A set $A$ has hyperimmune-free Turing degree 
if for every $f \leq_T A$ there is a recursive function $g$
with $\forall x \, [f(x) \leq g(x)]$.

\section{Solutions to Open Problems}

\noindent
Brendle, Brooke-Taylor, Ng and Nies
\cite[Question 4.1]{Brendle.Brooke.ea:14}, 
posed three questions,  
(7), (8) and (9). In this section, we will provide the
answers to the questions (7) and (9). For this we note that
by \cite[Theorem 4.2]{Brendle.Brooke.ea:14} /
\cite[Corollary VI.12]{Rupprechtthesis} and \cite[Theorem 5.1]{KMS},
we have the following result.

\begin{thm} \label{characterize-wme}
A set $A$ is weakly meager covering iff it is high or of $\DNR$ degree.
\end{thm}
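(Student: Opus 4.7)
The plan is to derive the equivalence by factoring through the intermediate notion of weakly Kurtz covering, giving the chain
\[
\text{weakly meager covering} \iff \text{high or weakly Kurtz covering} \iff \text{high or } \DNR \text{ degree}.
\]
The first equivalence is the content of \cite[Theorem~4.2]{Brendle.Brooke.ea:14} (equivalently \cite[Corollary~VI.12]{Rupprechtthesis}), and the second is \cite[Theorem~5.1]{KMS}. Composing the two biconditionals yields the stated result, so the work is to cite them and check that the two disjunctions match up correctly.

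For the ``if'' half I would unpack each disjunct separately. If $A$ is high, then $A$ computes a function $g$ dominating every recursive function, and the required $A$-recursive $f\colon 2^{<\omega}\to 2^{<\omega}$ can be built by searching up to time $g(|\sigma|)$ for partial recursive extensions of $\sigma$ that have stabilised, then choosing $f(\sigma)$ to disagree with each of them. If $A$ has $\DNR$ degree, \cite[Theorem~5.1]{KMS} provides an $A$-recursive Kurtz test $(G_i)_i$ with $\REC\subseteq\bigcap_i G_i$; each $G_i$ is a finite union of clopen sets of small measure and hence nowhere dense, so one converts the test into a meager-witnessing $f$ by letting $f(\sigma)$ be an extension of $\sigma$ whose cylinder lies inside $G_{|\sigma|}$.

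The substantive direction is ``only if'', which is exactly \cite[Theorem~4.2]{Brendle.Brooke.ea:14} / \cite[Corollary~VI.12]{Rupprechtthesis}. Given an $A$-recursive $f$ witnessing weak meager covering, I would argue contrapositively: assuming $A$ is not high, one wishes to promote $f$ to an $A$-recursive Kurtz test covering $\REC$, after which $A$ being of $\DNR$ degree follows from \cite[Theorem~5.1]{KMS}. The main obstacle, and the reason this half is nontrivial, is bridging the gap between the purely combinatorial hitting property of $f$ and a genuine measure bound: one uses the failure of $A$-recursive functions to dominate every recursive function to control the string lengths $|f(\sigma)|$, and then bounds the measure of the union of cylinders $[f(\sigma)]$ along each level of the tree by a rapidly decreasing recursive sequence, so that it becomes a legitimate Kurtz test rather than just a meager cover.
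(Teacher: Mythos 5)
Your overall plan---quote the two cited results and compose them---matches the paper, which states Theorem~\ref{characterize-wme} as an immediate consequence of \cite[Theorem 4.2]{Brendle.Brooke.ea:14} / \cite[Corollary VI.12]{Rupprechtthesis} together with \cite[Theorem 5.1]{KMS}, with no further argument. The genuine gap is the content you assign to those citations. Your chain factors through ``weakly Kurtz covering'', but neither cited result is about that notion: the Rupprecht/Brendle--Brooke-Taylor--Ng--Nies result is the characterisation of weak meager covering itself, and \cite[Theorem 5.1]{KMS} is a Kolmogorov-complexity characterisation of the $\DNR$ degrees (the paper uses it in exactly that form in the sixth part of the proof of Theorem~\ref{firstStatement}); it does not hand you, for every set of $\DNR$ degree, an $A$-recursive Kurtz test covering $\REC$. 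In fact the implication ``$\DNR$ degree $\Rightarrow$ weakly Kurtz covering'' is not available at all: by Proposition~\ref{engulf} it would yield ``$\DNR$ $\Rightarrow$ weakly Schnorr covering'', which would force a negative answer to \cite[Question 4.1(8)]{Brendle.Brooke.ea:14}, discussed as open later in this paper. The paper only proves the much weaker Theorem~\ref{BjoernFrank}: weak Kurtz covering follows from $\DNR^\psi_h$ for one specially constructed numbering and a slowly growing bound $h$, and the boundedness of $h$ is exactly what makes the measure computation there work; an unbounded $\DNR$ function gives no such control. The other half of your first equivalence (weakly meager covering and not high implies weakly Kurtz covering) is likewise not in the cited sources, so the claimed biconditional chain is unsubstantiated even though its endpoints are correct.

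There are also local errors in your sketches. To convert a Kurtz cover of $\REC$ into a meager-covering $f$ you need $f(\sigma)$ to \emph{avoid} each recursive $B$, so one chooses $f(\sigma)\sqsupseteq\sigma$ with $[f(\sigma)]$ \emph{disjoint} from suitably deep components $G_{i,j}$ containing $B$ (possible because their total measure can be made smaller than $2^{-|\sigma|}$); taking $f(\sigma)$ with cylinder \emph{inside} $G_{|\sigma|}$ is in the wrong direction (it does not prevent $f(\sigma)\sqsubseteq B$) and may be impossible, since $G_{|\sigma|}$ need not meet $[\sigma]$ at all; moreover a nonempty clopen set of small measure is not nowhere dense---only the intersection of the test is. Symmetrically, in the ``only if'' direction, bounding the measure of $\bigcup_\sigma [f(\sigma)]$ cannot turn $f$ into a Kurtz test covering $\REC$: such a test must place each recursive real \emph{inside} clopen sets of measure at most $2^{-i}$, whereas $f$ only supplies strings that recursive reals eventually avoid. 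Your treatment of the high case via a dominating function is fine, but the correct reading of the theorem is simply that Rupprecht's characterisation of weak meager covering as ``high or $\DNR$ degree'', with \cite[Theorem 5.1]{KMS} as an ingredient, is being cited verbatim.
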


\noindent
We recall the following well-known definitions and results.

\begin{df}
A function $\psi$, written $e\mapsto (n\mapsto \psi_e(n))$,
is a recursive numbering if the function
$(e,n)\mapsto \psi_e(n)$ is partial recursive.
For a given recursive numbering $\psi$ and a function $h$,
we say that $f$ is $\DNR_h^\psi$ if for all $n$,
$f(n)\ne \psi_n(n)$ and $f(n)\le h(n)$.
An \emph{order function} is a recursive, nondecreasing, unbounded function.
\end{df}

\begin{thm}[{Khan and Miller \cite[Theorem 4.3]{Khan.Miller:14}}]\label{KhanMiller}
For each recursive numbering $\psi$ and for each order function $h$,
there is an $f \in \DNR^\psi_h$ such that $f$ computes no Kurtz random real.
\end{thm}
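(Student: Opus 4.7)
The plan is to build $f$ by tree-forcing. Let $T$ denote the tree of all $\DNR_h^\psi$ functions, viewed inside the compact product $\prod_n\{0,1,\ldots,h(n)\}$. Because $\psi_n(n)$ forbids at most one of the $h(n)+1$ possible values at level $n$, the tree $T$ is recursive, finitely branching, and every extendible node at level $n$ has at least $h(n)$ immediate successors in $T$; unboundedness of $h$ gives ample branching for the pruning to come. I would construct a descending sequence $T=T_{-1}\supseteq T_0\supseteq T_1\supseteq\cdots$ of recursive, infinite, finitely branching subtrees and take $f$ to be any path through $\bigcap_e T_e$, which is non-empty by compactness.

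At stage $e$ the requirement $R_e$ is: every $f\in[T_e]$ either makes $\Phi_e^f$ partial or places $\Phi_e^f$ inside a fixed, unrelativised Kurtz null set. I would use the familiar dichotomy: \emph{either} there exist $\sigma\in T_{e-1}$ and $x$ with $\Phi_e^g(x)\!\uparrow$ for every path $g\in[T_{e-1}]$ extending $\sigma$, in which case we cone up to $\sigma$ and $\Phi_e^f$ is forced partial; \emph{or} for every $\sigma\in T_{e-1}$ and every $x$ some extending path converges on input $x$. In the second case we must find a recursive infinite subtree $T_e\subseteq T_{e-1}$ on which $\Phi_e^g$ is total for every path and the image $\{\Phi_e^g:g\in[T_e]\}$ is a recursively presented Kurtz null set.

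The second case is the main obstacle, and the strategy is to prune level by level. Pick fast-growing auxiliary sequences $n_j$ and $m_j$, consider the image set $S_j=\{\Phi_e^\tau\restr m_j:\tau\in T_{e-1},\,|\tau|=n_j\}$, and discard branches of $T_{e-1}$ whose contribution would push $|S_j|$ past a budget of $2^{m_j-j}$. The surviving images then generate a closed-open set $G_{e,j}$ of measure at most $2^{-j}$, giving a recursive Kurtz test $(G_{e,j})_j$ with $\Phi_e^f\in\bigcap_jG_{e,j}$ for every $f\in[T_e]$. A counting argument using the branching lower bound $h(n)$ shows that only a vanishing fraction of each node's successors needs to be sacrificed, so $T_e$ remains infinite (in fact, finitely branching). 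The unboundedness of $h$, rather than any specific growth rate, is exactly what makes this counting go through, absorbing the cumulative losses across all stages. Assembling the $T_e$ yields $f\in\DNR_h^\psi$ such that every $\Phi_e^f$ is either partial or lies in the Kurtz null set $\bigcap_jG_{e,j}$, so $f$ computes no Kurtz random real.
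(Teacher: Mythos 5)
This theorem is not proved in the paper at all: it is imported verbatim from Khan and Miller, so the only comparison to make is with their bushy-tree argument, and your sketch has a genuine gap exactly at the point where that machinery does the work. The critical assertion is that, in the totality case, discarding the branches whose images would push $\lvert S_j\rvert$ past the budget $2^{m_j-j}$ costs ``only a vanishing fraction of each node's successors.'' That is unjustified and in the worst case false. Nothing prevents $\Phi_e$ from spreading the level-$n_j$ nodes of $T_{e-1}$ essentially uniformly over all of $\{0,1\}^{m_j}$ (for instance $\Phi_e^f$ may simply encode $f$ bit by bit); then keeping $\lvert S_j\rvert\leq 2^{m_j-j}$ forces you to delete every node whose image lies outside a chosen $2^{-j}$-fraction of $\{0,1\}^{m_j}$, i.e.\ a $(1-2^{-j})$-fraction of the whole level, selected globally by image value rather than locally at each node. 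Whether the surviving nodes still contain an infinite subtree --- let alone one bushy enough to serve the remaining infinitely many requirements --- is a big-versus-small question about partitions of bushy trees, and for slowly growing $h$ (the application in this paper takes $h(m)$ roughly $2+\log(m+1)$, so branching $2$ or $3$ for a long time) the naive pigeonhole yields nothing: an $h$-bushy level split into $2^{m_j}$ image classes need not have any class, nor any small union of classes, that is bushy at all. Establishing that one can nevertheless concentrate the image on a set of measure $2^{-j}$ while preserving a quantified amount of bushiness, and budgeting the branching of $h$ across all requirements, is precisely the content of Khan and Miller's smallness/bigness lemmas; your counting remark does not substitute for it.

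There are secondary problems as well. Your dichotomy is not the one needed: from ``for every $\sigma$ and every $x$ some extending path converges'' you cannot extract a subtree on which $\Phi_e$ is total without further pruning, and the correct dichotomy (the set of nodes forcing convergence is big above $\sigma$, versus small and permanently avoidable at a cost to the bushiness budget) must leave enough branching for later requirements, so coning down to thin subtrees is not available. Moreover the tree of $\DNR^\psi_h$ strings is only co-r.e., not recursive (whether $\sigma(n)\neq\psi_n(n)$ is a $\Pi^0_1$ condition), and after pruning the trees $T_e$, and hence the sets $S_j$ and $G_{e,j}$, are defined from $\Sigma^0_1/\Pi^0_1$ facts about $\Phi_e$ on $T_{e-1}$; the definition of a Kurtz test used in this paper requires the clopen sets to be listed by an unrelativized recursive function, so the effectiveness of the covering test needs an argument you do not give. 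These latter points are repairable with care, but together with the main combinatorial gap the proposal does not yet prove the theorem.
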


\noindent
Wang (see \cite[Theorem 7.2.13]{DH}) gave a martingale characterisation of 
Kurtz randomness.
While it is obvious that weakly Kurtz covering implies weakly Schnorr covering
for the martingale notions, some proof is needed in the case that
one uses tests (as done here).

\begin{pro}\label{engulf}
If $A$ is weakly Kurtz covering then $A$ is weakly Schnorr covering.
\end{pro}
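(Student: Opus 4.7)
The plan is to convert the $A$-recursive Kurtz-covering array into a single Schnorr test relative to $A$ by means of a diagonal union. The key observation is that any closed-open set given by an explicit finite list of strings already has an exactly computable rational measure, so each column of the Kurtz-covering array is essentially already a Schnorr-style object; what we must do is combine all the columns into one test whose measure remains effectively controllable.

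Concretely, let $(G_{i,j})$ be the $A$-recursive array witnessing that $A$ is weakly Kurtz covering, so $\mu(G_{i,j}) \leq 2^{-j}$ and every recursive real lies in $\bigcap_j G_{i,j}$ for some $i$. I would set
\[
V_k \;=\; \bigcup_{i \in \omega} G_{i,\, i+k+1}.
\]
First, $(V_k)$ is uniformly $A$-r.e.\ as an open set, since the finite string lists defining each $G_{i,j}$ are $A$-recursive in $i,j$. Next, the measure bound is immediate from a geometric sum:
\[
\mu(V_k) \;\leq\; \sum_{i=0}^{\infty} 2^{-(i+k+1)} \;=\; 2^{-k}.
\]

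The one step that needs care is the effective computability of $\mu(V_k)$ uniformly in $k$ from $A$, which is what distinguishes a Schnorr test from a mere effective null set. For each finite $N$ the truncation $U_{k,N} = \bigcup_{i \leq N} G_{i,\,i+k+1}$ is a clopen set with $A$-computable dyadic-rational measure, and the tail satisfies
\[
\mu(V_k) - \mu(U_{k,N}) \;\leq\; \sum_{i > N} 2^{-(i+k+1)} \;=\; 2^{-(N+k+1)}.
\]
Hence $\mu(V_k)$ can be approximated from $A$ to within any desired precision uniformly in $k$, so $(V_k)$ really is a Schnorr test relative to $A$.

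Finally, for coverage, if $R$ is recursive pick $i_R$ with $R \in \bigcap_j G_{i_R,j}$; then for every $k$ we have $R \in G_{i_R,\, i_R + k + 1} \subseteq V_k$, so $R \in \bigcap_k V_k$. Thus the set of recursive reals is Schnorr null relative to $A$, i.e., $A$ is weakly Schnorr covering. I expect the main obstacle to be precisely the computability of $\mu(V_k)$, which is why the exponent in $G_{i,\,i+k+1}$ is chosen so that the tail decays fast enough to certify this uniformly from $A$.
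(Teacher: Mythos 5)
Your proposal is correct and takes essentially the same route as the paper: both form the diagonal union $\bigcup_i G_{i,\,i+k+1}$, bound its measure by the geometric series $2^{-k}$, certify $A$-computability of the measure from the fast-converging tail, and obtain coverage from the fact that each recursive real lies in $\bigcap_j G_{i,j}$ for some $i$. Your write-up is in fact somewhat more careful than the paper's on the measure-computability step (and the paper's stated sum has an indexing slip), but there is no substantive difference in approach.
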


\begin{proof}
Suppose $A$ is weakly Kurtz covering, as witnessed by the $A$-recursive
array of closed-open sets $G_{i,j}$. Then the sets $F_j = \cup_i G_{i,i+j+1}$
form an $A$-re\-cursive Schnorr test, as each $F_j$ has at most the measure
$\sum_j 2^{-i-j-2} = 2^{-i-1}$ and the measures of the $F_j$ is uniformly
$A$-recursive as one can relative to $A$ compute the measure of each
$G_{i,i+j+1}$ and their sum is fast converging. As for each recursive set
there is an $i$ such that all $G_{i,i+j+1}$ contain the set, each recursive
set is covered by the Schnorr test.
\end{proof}

\begin{thm}\label{BjoernFrank}
There is a recursive numbering $\psi$ and an order function $h$ 
such that for each set $A$, if $A$ computes a function $f$ that is 
$\DNR^\psi_h$ then $A$ is weakly Kurtz covering.
\end{thm}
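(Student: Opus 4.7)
The plan is to arrange $\psi$ and $h$ so that any $\DNR_h^\psi$ function $f$ encodes, for every partial recursive $\{0,1\}$-valued function $\varphi_e$, enough diagonal information to build a Kurtz test covering $\varphi_e$ whenever $\varphi_e$ is total. The idea is to record $\varphi_e$ in independent blocks whose length grows doubly exponentially with the test level~$j$, so that excluding a single value per block provides the required measure bound $2^{-j}$.

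Using the $s$-$m$-$n$ theorem I will fix a recursive numbering $\psi$ and an encoding $n(e,j,m)\in\omega$ such that $\psi_{n(e,j,m)}(n(e,j,m))$ equals the integer in $\{0,\ldots,2^{2^j}-1\}$ whose binary expansion is the $m$-th length-$2^j$ block of $\varphi_e$, provided $\varphi_e$ is defined on this block, and undefined otherwise. I choose the encoding so that for each $j$ the indices used at level $j$ form a contiguous interval $I_j$, with $I_0, I_1, I_2, \ldots$ listed in order of increasing $j$, and set $h$ to the constant value $2^{2^j}$ on $I_j$. Then $h$ is a recursive, nondecreasing, unbounded order function with $h(n(e,j,m)) = 2^{2^j}$ exactly.

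Now suppose $A$ computes $f \in \DNR_h^\psi$. Set $M(j) = \lceil j(\ln 2)\,2^{2^j}\rceil$ and $L(j) = 2^j \cdot M(j)$, and define
\[
G_{e,j} \;=\; \bigl\{\,x\in 2^\omega : \forall\, m < M(j),\ \text{the $m$-th length-$2^j$ block of } x\restr L(j) \neq f(n(e,j,m))\,\bigr\}.
\]
Because the blocks of a uniform $x\restr L(j)$ are independent and $f(n(e,j,m)) < 2^{2^j}$, one computes $\mu(G_{e,j}) = (1-2^{-2^j})^{M(j)} \leq e^{-M(j)\,2^{-2^j}} \leq 2^{-j}$, so each row $(G_{e,j})_j$ is a Kurtz test relative to $A$, computable uniformly from $A$. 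If $\varphi_e$ is total then its $m$-th block equals $\psi_{n(e,j,m)}(n(e,j,m))$, which by the $\DNR^\psi_h$ condition differs from $f(n(e,j,m))$, so $\varphi_e\in G_{e,j}$ for every $j$. Thus every recursive set lies in $\bigcap_j G_{e,j}$ for a suitable~$e$, so $A$ is weakly Kurtz covering.

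The delicate point I foresee is arranging $h(n(e,j,m))$ to be \emph{exactly} $2^{2^j}$, so that $f(n(e,j,m))$ is guaranteed to land in $\{0,\ldots,2^{2^j}-1\}$ and each constraint really excludes a $2^{-2^j}$ fraction; this is precisely what the contiguous-interval encoding above achieves. A minor bookkeeping issue is that each $I_j$ is finite and so only accommodates finitely many pairs $(e,m)$ at level~$j$; for levels $j$ below the threshold for a given $e$, one simply sets $G_{e,j}$ equal to the higher-level clopen set $G_{e,j'}$ for a sufficiently large $j'$, which still respects the Kurtz-test measure bound.
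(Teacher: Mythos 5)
Your proposal is correct in substance, and while it follows the same overall strategy as the paper -- build a numbering $\psi$ whose diagonal values code blocks of $\varphi_e$, so that any $\DNR^\psi_h$ function $f$ supplies, for every recursive set, a forbidden value on each of a sequence of coordinate blocks, and then assemble these exclusions into an $A$-recursive array of small clopen sets -- it differs from the paper's proof in two genuine technical respects. First, in the paper the block of $R$ coded at argument $s(e,n)$ is $R\restr I_{s(e,n)}$, so the position of the block depends on the index itself, and the paper therefore needs the fixed-point (recursion) theorem to obtain an $e$ with $\varphi_e(n)=R\restr I_{s(e,n)}$ for all $n$; your indexing $n(e,j,m)$ decouples the block position (the $m$-th block of length $2^j$) from the index, so a plain $s$-$m$-$n$ definition suffices and no recursion theorem is needed. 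Second, the paper's blocks have logarithmic length, so each exclusion removes only a $\Theta(1/m)$ fraction of measure and nullity comes from divergence of a harmonic-type series, the explicit $2^{-j}$ Kurtz bounds being left implicit in the computable partial products; your level-$j$ blocks have fixed exclusion probability $2^{-2^j}$ repeated $M(j)\approx j\ln 2\cdot 2^{2^j}$ times, giving $\mu(G_{e,j})\le 2^{-j}$ directly, which makes the Kurtz-test structure immediate. The price is a much faster-growing $h$ than the paper's (roughly logarithmic) one -- harmless, since the theorem only asks for some order function and Khan--Miller applies to any -- and the level-by-level allocation of indices, where each finite interval $I_j$ can host only finitely many pairs $(e,m)$; your padding of the rows $G_{e,j}$ for small $j$ by a higher-level set $G_{e,j'}$ handles this correctly. (The paper also arranges $\psi$ to be an acceptable numbering; this is not required by the statement and your $\psi$ need not be.)

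One small repair: with the paper's convention $f(n)\le h(n)$, setting $h\equiv 2^{2^j}$ on $I_j$ allows $f(n(e,j,m))=2^{2^j}$, a value that codes no length-$2^j$ block; such a constraint is vacuous, and in the worst case $\mu(G_{e,j})$ is not bounded by $2^{-j}$. Take $h\equiv 2^{2^j}-1$ on $I_j$ (still an order function), so every $f$-value is a legitimate block code; then your measure computation and the coverage argument go through verbatim.
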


\begin{proof}
Fix a correspondence between strings and natural numbers $\text{num}:2^{<\omega}\rightarrow\omega$ such that
\[
2^{\abs{\sigma}} - 1 \le \text{num}(\sigma) \le 2^{\abs{\sigma}+1}-2.
\]
For instance, $\text{num}(\sigma)$ could be the position of $\sigma$ 
in the length-lexicographically lexicographic ordering of all strings as
proposed by Li and Vit\'anyi \cite{LV}.
Let $\text{str}(n)=\text{num}^{-1}(n)$ be the string representation
of the number $n$. Thus
\[
2^{\abs{\text{str}(n)}} - 1 \le \text{num}(\text{str}(n)) = n \le 2^{\abs{\text{str}(n)}+1}-2.
\]
Let $\varphi$ be any fixed recursive numbering, let
\[
\langle a,b\rangle = \text{num}(1^{\abs{\text{str}(a)}}0\text{str}(a)\text{str}(b))
\]
in concatenative notation. Let
$\psi_{2\langle e,n\rangle}(x) = \varphi_e(n)$ for any $x$
and $\psi_{2y+1}=\varphi_y$. Note that $\psi$ is an acceptable numbering.
Let $s(e,n)=2\langle e,n\rangle$.
Then if $f$ is $\DNR$ with respect to $\psi$ then $f$ has the following property with respect to $\varphi$:
\[
f(s(e,n))\ne \varphi_e(n).
\]
Indeed,
\[
f(s(e,n)) = f(2\langle e,n\rangle) \ne \psi_{2\langle e,n\rangle}(2\langle e,n\rangle) = \varphi_e(n).
\]
Moreover,
\begin{eqnarray*}
s(a,b) =2\langle a,b\rangle
\le 2 (2^{1+\abs{1^{\abs{\text{str}(a)}}0\text{str}(a)\text{str}(b)}})
&=& 8 (2^{\abs{1^{\abs{\text{str}(a)}}}}2^{\abs{\text{str}(a)}}2^{\abs{\text{str}(b)}} )
\\
= 8 (2^{\abs{\text{str}(a)}} 2^{\abs{\text{str}(a)}} 2^{\abs{\text{str}(b)}})
&\le& 8(a+1)^2(b+1).
\end{eqnarray*}
Consider a partition of $\omega$ into intervals $I_m$ such that 
$\abs{I_m}$ is $2+\log(m+1)$ rounded down, 
and let $h(m) = \abs{I_m}$.
If $f$ is $\DNR^\psi_h$ then we have
\[
\forall \varphi_e\, \forall n\,
(
f(s(e,n))\in\{0,1\}^{I_{s(e,n)}} \text{ and }f(s(e,n))\ne\varphi_e(n)
).
\]
Given a recursive set $R$, there is, by the fixed-point theorem,
an index $e$ such that, for all $n$,
$\varphi_e(n) = R \restrict I_{s(e,n)}$ and
$f(s(e,n)) \neq R \restrict I_{s(e,n)}$.
Note that for every fixed $e$,
\begin{eqnarray*}
    \prod_{n=0}^\infty (1-2^{-\abs{I_{s(e,n)}}}) \leq
    \prod_{n=e+2}^\infty (1-2^{-(2+\log(8(e+1)^2(n+1)+1))}) 
\\
    \le\prod_{n=e+2}^\infty (1-2^{-(3+\log(8(e+1)^2(n+1)))}) =
    \prod_{n=e+2}^\infty (1-2^{-(6+2\log(e+1)+\log(n+1))}).
\end{eqnarray*}
The last product in this formula is $0$, as the sum
$$
   \sum_{n=e+2}^\infty 2^{-(6+2\log(e+1)+\log(n+1))} =
   \frac1{64} \cdot (e+1)^{-2} \cdot \sum_{n=e+2}^\infty \frac1{n+1}
$$
diverges. Thus
\[
\mu( \{ B : \exists e\forall n \,[B\restrict I_{s(e,n)} \ne f(s(e,n))] \}) 
\leq 
\sum_e\prod_{n=0}^\infty (1-2^{-\abs{I_{s(e,n)}}}) = 0.
\]
So if $f$ is $A$-recursive then we have a $\Sigma^0_2(A)$ null set
that contains all recursive sets, as desired.
\end{proof}

\begin{thm}[{answer to 
	\cite[Question 4.1(7)]{Brendle.Brooke.ea:14}}]\label{47}
There exists a set $A$ satisfying the following conditions:
\begin{enumerate}
\item \label{WME} $A$ is weakly meager covering;
\item \label{BNSR} $A$ does not compute any Schnorr random set;
\item \label{HIF} $A$ is of hyperimmune-free degree;
\item \label{WSE} $A$ is weakly Schnorr covering.
\end{enumerate}
\end{thm}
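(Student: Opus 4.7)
\smallskip
The plan is to combine the Khan--Miller theorem (Theorem~\ref{KhanMiller}) with the hyperimmune-free basis theorem of Jockusch and Soare, using Theorems~\ref{characterize-wme} and~\ref{BjoernFrank} and Proposition~\ref{engulf} to translate between the four conditions.

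First, observe that by Theorem~\ref{characterize-wme}, condition~(\ref{WME}) is equivalent to $A$ being high or of $\DNR$ degree. Since every high degree is hyperimmune, condition~(\ref{HIF}) forces $A$ to be of $\DNR$ degree. Hence the task reduces to producing a set $A$ that is $\DNR^\psi_h$ for the specific $\psi$ and $h$ constructed in the proof of Theorem~\ref{BjoernFrank}, is of hyperimmune-free degree, and computes no Schnorr random real. Once we have such an $A$, condition~(\ref{WME}) holds because $A$ is of $\DNR$ degree; condition~(\ref{BNSR}) holds because every Schnorr random real is in particular Kurtz random, so it suffices that $A$ computes no Kurtz random real; condition~(\ref{HIF}) is satisfied by choice; and condition~(\ref{WSE}) follows from Theorem~\ref{BjoernFrank} combined with Proposition~\ref{engulf}.

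The key step is to refine Theorem~\ref{KhanMiller} so as to produce not merely a single function but a non-empty $\Pi^0_1$ class $Q$ of $\DNR^\psi_h$ functions, every element of which computes no Kurtz random real. The starting point is the non-empty $\Pi^0_1$ class of all $\DNR^\psi_h$ functions, viewed as a subclass of the compact product $\prod_n \{0,1,\ldots,h(n)\}$. Inspection of the Khan--Miller construction shows that it proceeds by repeatedly passing from a non-empty $\Pi^0_1$ class to a non-empty $\Pi^0_1$ subclass, each step diagonalising against one Turing functional $\Phi_e$ by shrinking to a subclass on which $\Phi_e^g$ fails to be a Kurtz random real for every $g$ that remains. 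The intersection $Q$ of this nested sequence is itself a non-empty $\Pi^0_1$ class. Applying the hyperimmune-free basis theorem to $Q$ then yields an $A \in Q$ of hyperimmune-free degree, and by the preceding paragraph this $A$ satisfies all four required conditions.

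The main obstacle lies in the refinement of Khan--Miller just described: that each diagonalisation step of their construction can be realised as restriction to a $\Pi^0_1$ subclass rather than as the commitment to a single branch, so that the whole construction remains inside the $\Pi^0_1$ framework on which the hyperimmune-free basis theorem operates. In a standard tree-forcing formulation of their argument this is routine, but it has to be checked against their concrete presentation; once this is done, the four items fall out as indicated.
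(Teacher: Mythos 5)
Your reduction of the four conditions is fine and matches the intended use of Theorems~\ref{characterize-wme} and~\ref{BjoernFrank} and Proposition~\ref{engulf}, but the central step of your plan --- refining Theorem~\ref{KhanMiller} to a nonempty $\Pi^0_1$ class $Q$ of $\DNR^\psi_h$ functions none of whose members computes a Kurtz random real, and then invoking the hyperimmune-free basis theorem --- cannot work. Any $\Pi^0_1$ subclass of the recursively bounded space $\prod_n\{0,\ldots,h(n)\}$ consisting of $\DNR^\psi_h$ functions is special (it has no recursive members), so by the Jockusch--Soare basis theorem it contains a member of nonzero r.e.\ degree; every nonzero r.e.\ degree is hyperimmune, every hyperimmune degree computes a weakly $1$-generic set, and weakly $1$-generic sets are Kurtz random. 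Hence every such class $Q$ necessarily contains a member computing a Kurtz random real, and your ``inspection of the Khan--Miller construction'' cannot be made to deliver what you want: their bushy-tree forcing does not proceed through uniformly $\Pi^0_1$ subclasses (the bushiness/smallness bookkeeping is not effective, and the construction commits to a single generic path), and this is not a presentational accident but forced by the obstruction above.

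The detour is also unnecessary, and this is where the paper's proof is much shorter: not computing any Kurtz random real already implies hyperimmune-free degree, by exactly the chain just quoted (a set of hyperimmune degree computes a weakly $1$-generic, hence a Kurtz random, real). So one simply takes $f\in\DNR^\psi_h$ computing no Kurtz random real as provided by Theorem~\ref{KhanMiller}, sets $A\equiv_T f$, and reads off the four items: (\ref{WME}) from Theorem~\ref{characterize-wme} since $A$ is of $\DNR$ degree, (\ref{BNSR}) since Schnorr random implies Kurtz random, (\ref{HIF}) from the implication above, and (\ref{WSE}) from Theorem~\ref{BjoernFrank} together with Proposition~\ref{engulf}. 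No basis theorem and no modification of Khan--Miller are needed; the missing ingredient in your write-up is precisely the implication ``computes no Kurtz random $\Rightarrow$ hyperimmune-free''.
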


\begin{proof}
Let $h$ and $\psi$ as in Theorem \ref{BjoernFrank}.
By Theorem \ref{KhanMiller}, there is an $f \in \DNR^\psi_h$ such that $f$ computes no
Kurtz random real. Let $A$ be a set Turing equivalent to $f$.
\begin{enumerate}
\item[\ref{WME}.] By Theorem \ref{characterize-wme}, $A$ is weakly meager covering. 
Alternatively, one could use the fact that every weakly Kurtz covering
oracle is also weakly meager covering and derive the item \ref{WME} from
the proof of item \ref{WSE}.
\item[\ref{BNSR}.] Since each Schnorr random real is Kurtz random,
$A$ does not compute any Schnorr random real.
\item[\ref{HIF}.] Since $A$ does not compute any Kurtz random real,
$A$ is of hyperimmune-free degree.
\item[\ref{WSE}.] By Theorem \ref{BjoernFrank}, $A$ is weakly Kurtz covering.
In particular, by Proposition \ref{engulf}, $A$ is weakly Schnorr covering.
\end{enumerate}
This completes the proof.
\end{proof}

\medskip
\noindent
The following proposition is well-known and will be used in 
various proofs below.

\begin{prop} \label{prop:tt}
If $A$ is of hyperimmune-free Turing degree and $B\leq_T A$ then $B\leq_{tt}A$.
\end{prop}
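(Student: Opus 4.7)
The plan is to start from any Turing reduction $\Phi$ witnessing $B=\Phi^A$ and upgrade it to a truth-table reduction by replacing an $A$-recursive bound on its behaviour by a recursive bound, which hyperimmune-freeness supplies. The only thing preventing an arbitrary Turing reduction from being a truth-table reduction is that $\Phi^X$ may fail to halt on oracles $X \neq A$, so the entire task is to bound the computation so that divergence cannot occur.

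First I would consider the running time function $t(x)$, defined as the number of steps $\Phi^A(x)$ takes to halt. Since $\Phi^A$ is total, $t$ is a total $A$-recursive function. Because $A$ has hyperimmune-free degree, there is a recursive function $s$ with $s(x)\geq t(x)$ for every $x$. Under the usual convention that writing a query to position $i$ costs at least $i$ steps, $s(x)$ also bounds the use of the computation, so the list of oracle bits that could possibly be examined by $\Phi^X(x)$ within $s(x)$ steps is the recursive set $\{0,1,\ldots,s(x)\}$, independently of the oracle $X$.

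Next I would define a new functional $\Psi$ by: on input $x$ with oracle $X$, simulate $\Phi^X(x)$ for $s(x)$ steps, output the result if the simulation halts in that time, and output $0$ otherwise. By design $\Psi^X(x)$ is defined for every oracle $X$ and every input $x$, so $\Psi$ is a truth-table reduction: it can be presented as a recursive function sending $x$ to the canonical index of the query set $\{0,\ldots,s(x)\}$ together with the Boolean function that, on a string $\sigma$ of length $s(x)+1$, returns what the bounded simulation computes when the oracle answers follow $\sigma$.

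Finally, when the oracle is actually $A$, the simulation halts within $t(x)\leq s(x)$ steps and outputs $\Phi^A(x)=B(x)$, so $\Psi^A=B$ and $B\leq_{tt}A$. The one subtle point, which I take to be the main obstacle, is that one must argue convergence of the simulation for \emph{every} oracle, not just $A$; this is precisely what forces the use of the recursive bound $s$ rather than the $A$-recursive bound $t$, and it is exactly here that hyperimmune-freeness is used. An alternative packaging would dominate the use function $u(x)$ directly by a recursive $g$ and bound the simulation by $g(x)$ plus the halting-time majorant, but the time-bound formulation above is the cleanest.
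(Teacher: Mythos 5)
Your proof is correct and is exactly the standard argument: majorise the $A$-recursive running-time (hence use) of the reduction by a recursive bound via hyperimmune-freeness, then truncate the functional so it converges on all oracles. The paper states this proposition without proof as a well-known fact, and your write-up, including the observation that the real issue is totality on oracles other than $A$, is precisely the argument it relies on.
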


\noindent
Franklin and Stephan \cite{FS10} gave the following characterisation:
A set $A$ is \emph{Schnorr trivial} iff for every $f \leq_{tt} A$
there is a recursive function $g$ such that, for all $n$,
$f(n) \in \{g(n,0),g(n,1),\ldots,g(n,n)\}$; this characterisation
serves here as a definition.

\begin{thm}[{answer to 
	\cite[Question 4.1(9)]{Brendle.Brooke.ea:14}}] \label{firstStatement}
There is a hyperimmune-free oracle $A$ which is not $\DNR$ (and thus
low for weak $1$-genericity) and which is not Schnorr trivial
and which does not Schnorr cover all recursive sets.
\end{thm}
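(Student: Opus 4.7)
The plan is to build $A$ via hyperimmune-free tree forcing, constructing a nested sequence of recursive perfect binary trees $T_0\supseteq T_1\supseteq\cdots$ with increasing stems $\sigma_s\in T_s$, and taking $A=\bigcup_s\sigma_s$. Then $A$ is automatically of hyperimmune-free degree, so by Proposition~\ref{prop:tt} every $B\leq_T A$ is in fact $\leq_{tt} A$. Across the stages I will interleave three families of requirements, one for each $e\in\omega$: $\mathcal{R}^{\DNR}_e$, that $\Phi_e^A$ is not a $\DNR$ function; $\mathcal{R}^{\mathrm{ST}}_e$, that some $n$ satisfies $A\restr n \notin \{\varphi_e(n,0),\ldots,\varphi_e(n,n)\}$; and $\mathcal{R}^{\mathrm{WSC}}_e$, that if $\Phi_e^A$ codes an $A$-recursive Schnorr test then some recursive real escapes it. Together these will yield $A$ not of $\DNR$ degree (hence, being hyperimmune-free, low for weak $1$-genericity), not Schnorr trivial (witness $f(n)=A\restr n\leq_{tt}A$), and not weakly Schnorr covering $\REC$.

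Requirement $\mathcal{R}^{\DNR}_e$ I will handle by the standard recursion-theorem trick: via $s$-$m$-$n$ and the fixed-point theorem I choose $k$ such that $\varphi_k(k)$ is computed by searching $T_e$ for the first $\sigma$ with $\Phi_e^\sigma(k)\downarrow$ and returning that value. If such $\sigma$ exists, prune $T_{e+1}$ to its extensions so that $\Phi_e^A(k)=\varphi_k(k)$; otherwise $\Phi_e^A(k)\uparrow$. Requirement $\mathcal{R}^{\mathrm{ST}}_e$ uses only perfectness of $T_e$: for $n$ large enough that $T_e$ has more than $n+1$ strings at level $n$ and $\varphi_e(n,i)\downarrow$ for all $i\leq n$, I pick any $\tau\in T_e$ of length $n$ avoiding $\{\varphi_e(n,i):i\leq n\}$ and prune to its extensions. (If $\varphi_e$ is not total, the candidate trace is already invalid.) Non-recursiveness of $A$ is then automatic, since otherwise the recursive trace $g(n,i)=A\restr n$ would contradict $\mathcal{R}^{\mathrm{ST}}_e$ at its index.

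The hard part will be $\mathcal{R}^{\mathrm{WSC}}_e$: unlike in the Kurtz case, where test components are closed-open and a finite-use computation from $\sigma$ freezes them outright, a Schnorr test component $U_1^A$ is merely c.e., so extensions of $A$ beyond any committed $\sigma$ can still enumerate further small cylinders into $U_1^A$. I plan to search $T_e$ for a $\sigma$ such that $\Phi_e^\sigma$ has already enumerated into $U_1$ a closed-open set $V$ of measure close to the Schnorr bound $2^{-1}$; the residual budget $2^{-1}-\mu(V)$ is then tiny, and the hyperimmune-free hypothesis --- which forces $\mu(U_1^A)\leq_{tt} A$ with recursively bounded use --- will let me commit to $\sigma$ so that any later cylinders are confined to a recursively known small region, allowing a recursive real $R_e$ to be placed safely outside. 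Pruning $T_{e+1}$ to extensions of $\sigma$ then locks in $R_e\notin U_1^A$ for all $A$ through $T_{e+1}$; if no suitable $\sigma$ exists, $\Phi_e^A$ already fails to be a valid Schnorr test on $[T_e]$ and the requirement is vacuous. Standard verification then yields an $A$ satisfying all the theorem's conclusions.
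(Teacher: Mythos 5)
Your overall frame (Miller--Martin tree forcing, Proposition~\ref{prop:tt} to restrict to tt-reductions, and the standard recursion-theorem and trace-avoidance moves for the non-$\DNR$ and non-Schnorr-trivial requirements) is sound in outline and is close in spirit to the paper, which likewise builds a partial recursive $\psi$ and takes a hyperimmune-free total extension. The gap is in $\mathcal{R}^{\mathrm{WSC}}_e$, which is the actual crux of the theorem (the easy route via Terwijn--Zambella \cite{TZ01} is blocked precisely because you also make $A$ not Schnorr trivial). Your plan is a one-shot commitment: find $\sigma$ on $T_e$ whose already-enumerated part $V$ of $U_1$ has measure close to the bound, commit to $\sigma$, and place a recursive real $R_e$ ``outside''. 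Two things go wrong. First, the fallback claim is false: if no such $\sigma$ exists on $T_e$, $\Phi_e^A$ can still be a perfectly valid Schnorr test relative to every branch --- validity only requires that $\mu(U_i^A)$ be $A$-computable and at most $2^{-i}$, not that the enumerated measure ever approach the bound along finite oracle prefixes on your tree. Second, and more fundamentally, a small residual measure budget does not confine the \emph{location} of the residual measure. A tt-reduction with recursively bounded use computing $\mu(U_1^A)$ tells you the value of the measure, not where the remaining cylinders will go; for different branches through $T_{e+1}$ the leftover $\varepsilon$ of measure can be enumerated anywhere in Cantor space, in particular exactly on top of whichever fixed recursive real $R_e$ you chose. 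So the step ``any later cylinders are confined to a recursively known small region'' has no justification, and without it the requirement is simply not met: you must protect one fixed recursive real against the tail of an $A$-c.e.\ object whose future behaviour depends on all of $A$, and no finite commitment plus measure counting at a single node achieves this.

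For comparison, the paper handles exactly this difficulty by a different and genuinely more involved mechanism: it works with savings martingales $M_k \leq_{tt} A$ together with recursive bounds $f_k$, constructs the recursive set $B$ adversarially against the \emph{sum} $N_{k,x,t}$ of $M_k^{\tilde A}$ over all branches $\tilde A$ still on the tree (so the sum never increases along $B$), and then on designated intervals $I_n$ kills only the roughly $r_k \cdot 2^{|I_n|}$ strings on which the individual martingale value became large, with a global budget ($\sum_k (k+1)r_k = 1/2$, plus the Kolmogorov-complexity killings) ensuring the tree keeps infinite branches; iterating this keeps $M_k^A(B\restr f_k(n))$ below the Schnorr success threshold on all surviving branches. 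Some such iterated averaging-and-pruning (or an equivalent device) is needed for your covering requirement; a single measure-filling commitment cannot replace it. A minor further point: hyperimmune-freeness is not ``automatic'' from using recursive perfect trees --- you must interleave the usual totality-forcing steps --- though that part is standard.
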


\begin{proof}
We show that there is a set $A$ such that the following conditions hold:
\begin{enumerate}
\item $A$ is not $\DNR$;
\item $A$ does not have hyperimmune degree;
\item $A$ is not Schnorr trivial;
\item $A$ is not weakly Schnorr covering.
\end{enumerate}
To this end, a partial-recursive $\{0,1\}$-valued function $\psi$
is constructed such that every total extension $A$ of
hyperimmune-free degree satisfies the conditions that
$A$ is not $\DNR$, not Schnorr trivial and not weakly Schnorr covering.
The property that $A$ is not Schnorr trivial is obtained
by showing that there is an $A$-recursive
function $f$ such that $C(f(x)) > x$ for infinitely many~$x$.
(Here, $C$ denotes the plain Kolmogorov complexity. In what follows, 
by Kolmogorov complexity we will always mean the plain complexity.)
The property that $A$ is not weakly Schnorr covering will be obtained by
showing that there is no martingale tt-reducible to $A$ and no
recursive bound such that the martingale Schnorr succeeds on all
recursive sets using this bound. 
Note that since $A$ is of hyperimmune-free degree,
by Proposition~\ref{prop:tt}
it is sufficient to consider tt-reductions instead of Turing-reductions here. 

The basic idea is to construct the partial recursive
function $\psi$ such that its domain at every stage $s$
is the complement of the currently active intervals $I_n$.
Here $I_0 = \{0,1\}$ and, for $n>0$, $I_n = \{2^n+1,2^n+2,\ldots,2^{n+1}\}$.
When $\psi$ becomes defined on some interval $I_n$ by setting it nonactive,
$\psi$ takes on $I_n$ a characteristic function
$\sigma \in \{0,1\}^{I_n}$ which has not
been killed previously by the construction.\footnote{If $\sigma$ is killed then the infinite
branches of the tree extending $\sigma$ are cut off so that the choice to be killed will
no longer be available in the future.}
At each stage $t$ the following activities will be carried out:
\begin{itemize}
\item Select the requirement of highest priority which
needs attention and is permitted to act;
\item For the reserved interval $I_n$, find the next
interval $I_m$ which should be active and
which has to be so large that one can satisfy
the growth requirements of the martingale
to not succeed by making the $I_o$ with
$n<o<m$ to be non-active and by later killing
certain $\sigma \in \{0,1\}^{I_n}$ (see below);
\item Make $\psi$ defined on all intervals $I_o$ with
$n<o<m$;
\item Update the tree $T_t$ so that it takes the new $\psi_t$
but only those $\sigma$ killed before stage $t$ into account:
The tree $T_t$ has those infinite branches $\tilde A$ which extend
$\psi_t$ and which, on any active $I_n$, do not take a value
$\sigma$ which has been killed prior to stage $t$;
\item Kill every $\sigma \in \{0,1\}^{I_n}$ which needs to be
killed according to the selected requirement and which
has not been killed before (this depends on $T_t$);
\item Make $I_m$ to be the reserved interval for the requirement;
\item For all active $I_o$ with $o<t$ and all $\sigma \in \{0,1\}^{I_o}$,
if it is found within $t$ steps
that the conditional Kolmogorov complexity of $\sigma$ given $o$
is strictly below $2^o-1$ then kill $\sigma$
(if not already done so before);
\item Initialise and cancel requirements as needed for
requirements $R_{k',c'}$ with $k'<t$ and $c'\leq k'$.
\end{itemize}
An oracle $\tilde A$ is valid at $t$ iff it is an infinite branch
of $T_t$ and it is valid if it is an infinite branch
of $T = \cap_t T_t$. 
The tree $T$ will have infinite
hyperimmune-free branches $A$ and it will be shown that 
any such branch $A$ is neither Schnorr trivial 
nor $\DNR$ nor weakly Schnorr covering.

Now some more details are given for the requirements.
For these there is a list $(M_k,f_k)$ of martingales
$M_k$ given by truth-table reductions to oracles
and of a recursive bound functions $f_k$; though one
cannot avoid that partial truth-table reductions and
bound functions are in the list, one can nevertheless
make the list in a way that one can check for each
$\sigma,\ell$ and $t$ whether $M_k(\sigma)$ is defined within
$t$ steps and whether $f_k(0),\ldots,f_k(\ell)$ are all
defined within $t$ steps. Note that only the total $(M_k,f_k)$
are relevant and that the others will get stuck somewhere in
the construction and will be ignored by all sufficient large
instantiations of the requirements with true parameters. Now
$(M_k,f_k)$ succeeds on a recursive set $B$ iff there are infinitely
many $n$ such that $M_k(B(0)B(1)\ldots B(f_k(n))) > n$; it is
therefore a goal of the construction to prevent this from happening
and to construct together with $M_k$ a recursive set $B$ (depending
on $k$ as well) such that $(M_k^A,f_k)$ does not succeed on $B$.
For the construction, let $\use_k(x)$ denote the
first time $t>x$ is found such that $f_k$ is defined on all $y \leq x$
and $M_k^{\tilde A}(\sigma)$ is defined for all $\sigma$ up to length $x$
by querying only values below $t$, independently on which oracle
${\tilde A}$ is used; $\use_k(x) = \infty$ if some of the above mentioned
computations do not terminate. In the following, $t$ will always be
the number of the current stage and the requirements will explicitly
check that the use of those members of the list which are considered
to be valid is below the current stage number $t$ on the relevant
inputs.

One can without loss of generality
assume that each $M_k$ is a savings martingale so that it
never goes down by more than $1$ and that the functions
$f_k$ are strictly monotonically increasing; furthermore,
the functions and martingales are either total or defined up
to a certain point and undefined from then onwards.
Franklin and Stephan \cite{FS10}
provide more details on such type of martingales.
If ${\tilde A}$ is hyperimmune-free
then this list is sufficient to deal with all relevant
martingales as one can replace martingales by saving
martingales and then the bound by a recursive upper bound.
For each $k$ there are exactly $k+1$ many requirements
$R_{k,0},R_{k,1},\ldots,R_{k,k}$ for $(M_k,f_k)$.
The requirement $R_{k,c}$ is said to have
true parameters iff there are exactly $c+1$
indices $e \in \{0,1,\ldots,k\}$ with $M_e,f_e$
being total and $k$ is one of these.
Note that when $M_e,f_e$ are total then $M_e$
has to be a savings martingale as described above and
$f_e$ has to be a strictly monotonically increasing
recursive function.
At a stage $t$, a requirement $R_{k,c}$ can (a) be initialised,
(b) be cancelled, (c) require attention or (d) act.
\begin{description}
\item{Initialisation:}
A requirement $R_{k,c}$ can be initialised at stage $t$
and request an interval $I_q$
(so that $q$ denotes from now on the index of that interval which $R_{k,c}$
took while being initialised)
iff there are exactly $c$ numbers $k_0,k_1,\ldots,k_{c-1}$
with $k_0 < k_1 < \ldots < k_{c-1} < k$ such that the
following conditions hold:
\begin{itemize}
\item $q > k$ and $2^{|I_q|} \cdot r_k$ is an integer (for the
sequence of $r_k$ defined below);
\item for each $e \leq k$, $\use_e(\max(I_q)) < t$
iff $e \in \{k_0,k_1,\ldots,k_{c-1},k\}$;
\item for all $c'< c$,
the requirement $R_{k_{c'},c'}$ is currently
active and has reserved some interval $I_o$ with $o>q$;
\item $I_q$ is active and all intervals $I_o$ on which $R_{k,c}$ has
acted in prior stages satisfy $o<q$.
\end{itemize}
Let $E_{b,t}$ contain all oracles ${\tilde A}$
such that ${\tilde A}$ is on $T_t$ and
for all $D$ on $T_t$ which coincide with ${\tilde A}$
below the given bound $\use_k(b)$,
${\tilde A} \leq_{lex} D$, that is,
${\tilde A}$ is the least representative of the class
of oracles which do not differ below $\use_k(b)$ from ${\tilde A}$. Now let
$$
N_{k,b,t}(\sigma) =
\sum_{\mbox{${\tilde A} \in E_{b,t}$}} M_k^{\tilde A}(\sigma)
$$
for all $\sigma$ up to length $b$.
Now define $B$ up to the maximal value $x$ with $\use_k(x) < \min(I_q)$
as taken such that $N_{k,x,t}$ does not grow and let
$$
u = \max\{M^{\tilde A}_k(B(0)B(1)\ldots B(x)): {\tilde A} \in E_{x,t}\}.
$$
The values $u,x$ are updated and $B$ defined on more places when
the requirement acts.
\item{Cancellation:}
A requirement $R_{k,c}$ gets cancelled if there are more
than $c$ numbers $e < k$ 
for the initial interval $I_q$ on which $R_{k,c}$ got
initialised for the current run.
\item{Attention:}
Now let $r_0,r_1,\ldots$ be a recursive sequence of negative powers
of $2$ which converge from above to $0$ and have the property that
the sum of the $r_k \cdot (k+1)$ for $k=0,1,2,\ldots$ is $1/2$.
For a requirement $R_{k,c}$ currently having reserved an interval
$I_n$, recall that $x$ is the place up to which the recursive
set $B$ of the requirement has been defined
and $u$ is the maximal value $M^{\tilde A}(B(0)B(1)\ldots B(x))$ takes
for some ${\tilde A} \in T_t$ (assuming that $\psi$ does not get defined
on intervals below $I_n$ which will not happen in the case that
$R_{k,c}$ acts). Now the requirement $R_{k,c}$ needs attention if
there is an interval $I_m$ such that
\begin{itemize}
\item $I_n$ and $I_m$ are both active and $n<m$;
\item $R_{k,c}$ has currently reserved $I_n$;
\item $\use_e(\max(I_q))>t$ for all
$e \in \{0,1,\ldots,k-1\}-\{k_0,k_1,\ldots,k_{c-1}\}$
for the initial interval $I_q$ on which the current run
of the requirement $R_{k,c}$ was initialised;
\item the requirements $R_{k_{e},e}$ with $e<c$ have currently reserved
some interval $I_o$ with $o>m$;
\item the maximal value $x'$ with $\use_k(x') < \min(I_m)$
satisfies $x' > \max(I_n)$ and
$x' > f_k((u+1) \cdot 4^{\max(I_n)} / r_k)$.
\end{itemize}
\item{Acting:}
If $R_{k,c}$ receives attention, it acts as follows, where
the parameters $B,x,x'$ are as under the item ``Attention''.
\begin{itemize}
\item All $I_o$ with $n < o < m$ will be set non-active (if not
done before) and $\psi_t$ will be defined on these intervals
and $T_t$ will be updated as outlined above;
\item Let $u = \max \{M^{\tilde A}_k(B(0)B(1)\ldots B(x))$: $\tilde A$
is an infinite branch of $T_t\}$;
\item $N_{k,x',t}$ will be computed and $B$ will be extended from the
domain up to $x$ to the domain up to $x'$ in the way that
$N_{k,x',t}$ does not grow;
\item For each $\sigma \in \{0,1\}^{I_n}$ let $u_\sigma = \max\{
M^{\tilde A}_k(B(0)B(1)\ldots B(x'))$: $\tilde A$ is on $T_t$
and extends $\sigma\}$ --- once this is defined, one kills those
$r_k \cdot 2^{|I_n|}$ strings $\sigma \in \{0,1\}^{I_n}$
for which $u_{\sigma}$ is maximal (see next item);
\item Let $u'=\max\{M^{\tilde A}_k(B(0)B(1)\ldots B(x'))$:
${\tilde A}$ is on $T_t$ and ${\tilde A}$ restricted $I_n$
has not been killed in the previous step$\}$, that is, $u'$
is bounded by the value number $r_k \cdot 2^{|I_n|}+1$ in
a list of all the $u_\sigma$ considered, in descending order;
\item The new value of $x$ is the current $x'$ and the new interval
selected for $R_{k,c}$ is $I_m$ and the new value of $u$ is $u'$.
\end{itemize}
\end{description}
For the verification, it first
should be noted that for each $I_n$, at most
$2^{|I_n|}-1$ many $\sigma \in \{0,1\}^{I_n}$ get killed
and therefore the amount of $\sigma$ available is never
exhausted. The reason is that the requirements kill at
most $2^{|I_n|} \cdot \sum_{k,c} r_k = 2^{|I_n|-1}$ many
$\sigma$ and the Kolmogorov complexity condition at most
$2^{|I_n|-1}-1$ many $\sigma$, so at least one $\sigma$
remains. Thus the $\psi$ can on each $I_n$ get defined
when $I_n$ is set to be non-active and the tree $T_t$
has in each step and also in the limit infinite branches.
So there is a hyperimmune-free set $A$ on the tree $T$.

Second there are infinitely many $I_n$ which remain
active forever. Assume that it is shown that the interval
$I_n$ is never set to inactive. One can see that every interval
gets only finitely often reserved by a requirement
and therefore it happens only finitely often that a
requirement acts with the interval $I_n$ or a smaller
one being the reserved interval; when this has happened
for the last time, there is a larger interval $I_m$
such that $I_m$ is the least current active interval
above $I_n$. From now on, it only happens that some
interval $I_m$ or beyond will be the reserved interval
of a requirement which is going to act and therefore
$I_m$ will never be set inactive. So one can prove
by induction that infinitely many intervals will
remain active forever.

Third the resulting set is not Schnorr trivial as
there are infinitely many intervals $I_n$ from
$2^n+1$ to $2^{n+1}$ which remain active forever and
on them, $A$ restricted to $I_n$ has at least the
Kolmogorov complexity $2^n-1$ conditional to $n$;
thus the set $A$ is not Schnorr trivial.

Fourth, let $k_0,k_1,\ldots$ be the (noneffective) sublist
of all pairs $(M_k,f_k)$ such that $M_k$ is a total
truth-table reduction giving a savings martingale
and $f_k$ is a total recursive function.
Now one shows by induction over $c$ that each requirement
$R_{k_c,c}$ gets only finitely often cancelled
and is eventually permanently initialised and
acts infinitely often. Assume that the stage
$t$ is so large that the following conditions hold:
\begin{itemize}
\item the pair $(M_{k'},f_{k'})$ with
$k' \in \{0,1,\ldots,k_c\}-\{k_0,k_1,\ldots,k_c\}$
have reached their first undefined places and
let $y$ be the maximum of these places;
\item all cancellations of $R_{k_c,c}$ due to
these $k'$ have already occurred;
\item the requirements $R_{k_{c'},c'}$ with $c'<c$
are all initialised and will not be cancelled after
stage $t$ and will act infinitely often;
\item there is an active interval $I_n$ such that
neither $I_n$ nor any larger interval has so far
been reserved by $R_{k_c,c}$ and all requirements
$R_{k_{c'},c'}$ with $c'<c$ have currently reserved
some interval beyond $I_n$.
\end{itemize}
Then the requirement $R_{k_c,c}$ will be initialised,
for example on $I_n$; it will not be cancelled again.
Now one needs to show that it acts infinitely often;
assume by way of contradiction that the requirement
would remain forever on an interval $I_n$ without acting.
There is an interval $I_m$ beyond $I_n$ such that
$I_m$ is active forever and all the conditions of
the request of attention are satisfied except the
first one -- this is due to selecting an $I_m$
with sufficiently large index / position. Now,
by induction hypothesis, the requirements
$R_{k_{c'},c'}$ will act often enough so that
they eventually reserve intervals beyond $I_m$
and from that time onwards $R_{k_c,c}$ will
require attention and therefore eventually act.

Fifth: If $M_k,f_k$ are total and $c$ is chosen such
that $(k,c)$ are true parameters
then the set $B$ constructed by requirement $R_{k,c}$
in its infinite run is not covered by the martingale
$M_k$ with bound $f_k$ in the Schnorr sense.
There is a case distinction between the case
where, in a run, the requirement $R_{k,c}$ acts
for the first and for a subsequent time.

Assume now that $R_{k,c}$ acts for the first time.
Let $x$ be the value up to which $B$ has been defined
in the initialisation and let $u$ be the corresponding
maximum value taken by some martingale up to $x$ on $B$
which is valid at the time of initialisation. Note that
$\psi_t$ will be defined in stage $t$ on all values
strictly between $\max(I_n)$ and $\min(I_m)$. Then
$N_{k,x',t}$ is the sum of at most $2^{\max(I_n)}$ martingales
and $B$ is chosen on the values from $x+1$ up to $x'$ such
that $N_{k,x',t}$ does not increase; hence the value
$N_{k,x',t}(B(0)B(1)\ldots B(x'))$ is bounded
by $2^{\max(I_n)} \cdot u$ and therefore each
outgoing martingale $M_k^{\tilde A}$ satisfies
$$
M_k^{\tilde A}(B(0)B(1)\ldots B(x')) \leq 2^{\max(I_n)} \cdot u
$$
while at the same time $x'$ satisfies
$$
f_k((u+1) \cdot 4^{|I_n|} / r_k ) < x'
$$
and thus, for the new bound $u'$,
$$
f_k((u'+1) \cdot 2^{I_n} / r_k ) < x'
$$
which can be used as an incoming bound for subsequent actions 
of the requirement $R_{k,c}$.

If now $R_{k,c}$ acts for a subsequent time in the run of
a requirement, then one verifies besides the above assurance
on the outgoing bound -- the proof goes through unchanged --
also that the martingale cannot succeed in the Schnorr sense
between $x$ and $x'$ where $x'$ is the new point up to which
$B$ gets defined during the acting.

Now let $I_o$ be the interval on which
it acted before it acts on $I_n$ and let $I_m$ be the interval
where it scheduled to act next (though the interval might actually
be larger), that is $I_n$ and $I_m$ are the parameters used during
the current acting of the requirement. Note that $o < n < m$.
Furthermore, when the requirement acts on $I_n$ then $I_n$ is the
first active interval after $I_o$ and therefore $\psi$
is defined before stage $t$ between $\max(I_o)$ and $\min(I_n)$
and it will become defined between $\max(I_n)$ and $\min(I_m)$
during the stage $t$ or already before stage $t$.
Furthermore the sum of the martingales $N_{k,x',t}$
(with the parameters defined as in the proof) only need
to take into account the oracles which coincide with $\psi_t$
as only those can be identical with the $A$ as $A$ is on $T$.
Let $x$ be the bound to which $B$
is defined before $I_n$ acts and $x'$ be the bound
after $I_n$ acts; furthermore, $u$ and $u'$ are
defined accordingly and $t$ is the time when $R_{k,c}$
acts on $I_n$. By induction hypothesis,
$f_k((u+1) \cdot 2^{\max(I_o)} / r_k) < x$.
Therefore one has only to show that
$$
M^{\tilde A}_k(B(0)B(1)\ldots B(x')) < (u+1) \cdot 2^{\max(I_o)} / r_k
$$
in order to satisfy the constraint on non-success for all ${\tilde A}$ on $T_t$
which do not get ${\tilde A}$ restricted to $I_n$ killed in stage $t$.
This can be seen as $N_{k,t,x'}$ does not go up on $B$ from $x$ to $x'$
due to the choice of $B$ and furthermore there are
at most $2^{\max(I_o)}\cdot 2^{|I_n|}$ many initial segments
$\tau = {\tilde A}(0){\tilde A}(1)\ldots {\tilde A}(\min(I_m)-1)$
which have to be taken into account to compute the value of the sum
$N_{t,x',k}(B(0)B(1)\ldots B(x'))$;
among these values, the largest $r_k \cdot 2^{|I_n|}$ many
terms in the sum will be removed from it due to the killing of the
corresponding $\sigma$; it follows that the maximum $u'$
of the remaining terms satisfies
$$
u' \leq u \cdot \frac{2^{\max(I_o)} \cdot 2^{|I_n|}}{r_k \cdot 2^{|I_n|}}
\leq u \cdot 2^{\max(I_o)} / r_k
$$
which satisfies the required bound. This calculation is based on the
fact that if there are up to $a$ values whose
sum bounded by $u \cdot a$ and one kills the largest $b$ of these
values then the remaining values are each are bounded by
$u \cdot a/b$, as otherwise the $b$ killed values would
each be strictly above $u \cdot a/b$ and
have a sum strictly above $b \cdot u \cdot a/b = u \cdot a$
what is impossible by assumption on $u \cdot a$ being the
sum of all of the values. Now the $2^{\max(I_o)} \cdot 2^{|I_n|}$ in the
numerator is an upper bound on the overall number of terms to
be considered and $2^{|I_n|} \cdot r_k$ is a lower bound on the number
of largest terms to be removed from the sum which follows
from the overall number of new strings killed
in this iteration. Thus none of the surviving oracles $\tilde A$ satisfies
$$
M_k^{\tilde A}(B(0)B(1)\ldots B(x')) > u \cdot 2^{\max(I_o)} / r_k
$$
and so the martingale is below the value $u \cdot 2^{\max(I_o)} / r_k+1$
on all prefixes of $B(0)B(1)\ldots B(x')$.
Thus the growth bound is maintained between $x$ and $x'$.
In particular it follows that the growth bound on $M_k^A$ is 
maintained at every acting of the requirement except for the first after 
the initialisation. Therefore $(M_k^A,f_k)$ does not Schnorr succeed 
on the recursive set~$B$.

Sixth the set $A$ is not $\DNR$. To see this, recall
that for being $\DNR$ and hyperimmune-free there needs
to be a recursive function $g$ such that $A$ up
to $g(n)$ has at least Kolmogorov complexity $n$
for every $n$ \cite{KMS}; without loss of generality
$g$ can be taken to be strictly monotonically increasing.
There is a $k$ such that $f_k = g$
and $M^A_k(\sigma)$ a martingale which always bets $0$.
There is a corresponding requirement $R_{k,c}$ which
acts infinitely often. When acting with $I_n$ being
a reserved interval, the requirement ensures
that there is another interval $I_m$ such that
$f_k(2^{\max(I_n)} / r_k) < \min(I_m)$
and $\psi$ is defined between $\max(I_n)$ and
$\min(I_m)$. It follows that the Kolmogorov
complexity of $A$ up to $f_k(2^{\max(I_n)})$
is at most $\max(I_n)+n+ O(1)$ for this $I_n$
and infinitely many other $I_n$, thus the
constraint is violated and $A$ is not $\DNR$.
This completes the proof.
\end{proof}

\begin{rem}
The reader may object that the original question in \cite{Brendle.Brooke.ea:14} asked for a set that was 
\emph{not low for Schnorr tests} rather than \emph{not Schnorr trivial}. 
However, we can recall the following facts:
\begin{itemize}
	\item Kjos-Hanssen, Nies and Stephan \cite{KNS} showed that
        if $A$ is low for Schnorr tests then $A$ is low for Schnorr randomness;
	\item
	Franklin \cite{Fr08} showed that if $A$ is
        low for Schnorr randomness then $A$ is Schnorr trivial.
\end{itemize}
\end{rem}

\section{Infinitely Often Subuniformity and Covering}\label{sec:iosub}

\noindent
Let $\la . \, , .\ra$ denote a standard
recursive bijection from $\omega \times\omega$ to $\omega$.
For a function $P: \omega \rightarrow \omega$ define
\[
P_n(m) = P(\la n,m \ra )
\]
and say that $P$ \emph{parametrises} the class of functions
$\{P_n: n\in\omega\}$. We identify sets of natural numbers
with their characteristic functions. A class $\A$ is
\emph{(recursively) uniform} if there is a recursive function $P$
such that $\A = \{P_n: n\in\omega\}$, and \emph{(recursively) subuniform}
if $\A \subseteq \{P_n: n\in\omega\}$.
These notions relativise to any oracle $A$ to yield the notions of
\emph{$A$-uniform} and \emph{$A$-subuniform}.

It is an elementary fact of recursion theory that the recursive 
sets are not uniformly recursive.
The following theorem, as cited in Soare's book \cite[page 255]{Soare},
quantifies exactly how difficult it is to do this:

\begin{thm}[Jockusch] \label{Jockusch}
The following conditions are equivalent:
\begin{enumerate}[\rm (i)]
\item $A$ is high, that is, $A' \geq_T \emptyset''$,
\item the recursive functions are $A$-uniform,
\item the recursive functions are $A$-subuniform,
\item the recursive sets are $A$-uniform.
\end{enumerate}
\noindent
If $A$ has r.e.\ degree then {\rm (i)--(iv)} are each equivalent to:
\begin{enumerate}
\item[\rm (v)] the recursive sets are $A$-subuniform.
\end{enumerate}
\end{thm}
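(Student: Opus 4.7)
The plan is to prove the four conditions equivalent in a cycle, using Martin's characterisation of high degrees as those containing a function that dominates every recursive function. The implication (ii) $\Rightarrow$ (iii) is immediate since every $A$-uniform parametrisation is $A$-subuniform. For (iii) $\Rightarrow$ (i), given an $A$-subuniform parametrisation $P$ of the recursive functions, the $A$-recursive function $g(n) = 1 + \max_{i \leq n} P_i(n)$ dominates every recursive function $f$: fixing $n_0$ with $f = P_{n_0}$, we have $g(n) > P_{n_0}(n) = f(n)$ whenever $n \geq n_0$. Martin's theorem then yields that $A$ is high.

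For (i) $\Rightarrow$ (ii), I apply Martin's theorem to obtain $g \leq_T A$ dominating every recursive function, and parametrise over pairs $\la e, \sigma\ra$ with $\sigma \in \omega^{<\omega}$: set $P_{\la e, \sigma\ra}(n) = \sigma(n)$ for $n < |\sigma|$, and for $n \geq |\sigma|$ set $P_{\la e, \sigma\ra}(n) = \varphi_e(n)$ if this converges within $g(\la e, n\ra)$ steps, and $0$ otherwise. When $\varphi_e$ is total recursive, its run-time is dominated by $g$ beyond some $N$, and choosing $\sigma = \varphi_e \restrict N$ yields $P_{\la e, \sigma\ra} = \varphi_e$, so every recursive function appears. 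The remaining implications (ii) $\Rightarrow$ (iv) $\Rightarrow$ (iii) close the cycle: from (ii), pass to characteristic functions by setting $Q_n(m) = 1$ when $P_n(m) > 0$ and $Q_n(m) = 0$ otherwise, producing an $A$-recursive list of recursive $\{0,1\}$-valued functions in which every recursive set appears, since $\chi_R = P_n$ forces $Q_n = R$; from (iv), encode each recursive $f$ as the two recursive sets $G_f = \{\la n, f(n)\ra : n \in \omega\}$ and $U_f = \{\la n, k\ra : k \leq f(n)\}$, both of which appear in the $A$-uniform listing, and recover $f$ by bounded $\mu$-search through the graph component cut off by the bound component, with a $\sigma$-correction on initial segments to ensure a total $A$-recursive output.

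The main obstacle is ensuring in (i) $\Rightarrow$ (ii) that each $P_{\la e,\sigma\ra}$ is a bona fide recursive function rather than merely $A$-recursive, since the construction a priori invokes the $A$-recursive $g$. The standard fix is to arrange $g$ as the modulus of a fixed $\emptyset'$-recursive enumeration so that $P_{\la e,\sigma\ra}$ agrees with a specific partial recursive function which is total exactly when $\varphi_e$ is total and $\sigma$ is a correct initial segment, and otherwise reduces to a definite recursive fallback. Finally, the additional equivalence with (v) under the r.e.\ assumption splits as (iv) $\Rightarrow$ (v), which is trivial, and (v) $\Rightarrow$ (i), which reuses the $U_f$ encoding: given an $A$-subuniform list of recursive sets, the bound sets $U_f$ among them let us extract an $A$-computable function dominating every recursive function, and Martin's theorem applied to the r.e.\ degree $A$ returns highness.
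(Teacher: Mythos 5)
The paper does not actually prove this theorem -- it is quoted from Soare's book (attributed to Jockusch) -- so your proposal has to be judged on its own. The easy pieces are fine: (ii)$\Rightarrow$(iii) is trivial, (iii)$\Rightarrow$(i) via $g(n)=1+\max_{i\leq n}P_i(n)$ and Martin's domination theorem is correct, and (ii)$\Rightarrow$(iv) is correct. The serious gap is in (iv)$\Rightarrow$(iii) and in (v)$\Rightarrow$(i). Your extraction ``by bounded $\mu$-search through the graph component cut off by the bound component'' is not total: the listing contains \emph{every} recursive set, so among the rows you will find, e.g., $S_i=\emptyset$ as graph component and $S_j=\omega$ as bound component, and then the search for the least $k$ with $\langle n,k\rangle\in S_i$ or $\langle n,k\rangle\notin S_j$ diverges at every $n$; a $\sigma$-correction repairs only finitely many values, so the resulting $P$ is not a parametrisation at all. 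The failure is not cosmetic: your argument only uses that every recursive set occurs among the rows, i.e.\ only hypothesis (v), and never uses that \emph{all} rows are recursive (needed for (iv)) or that $A$ is r.e.\ (needed for (v)). If an argument of this shape worked, it would prove (v)$\Rightarrow$(i) for arbitrary $A$, which is false: a low PA-complete set $A$ (low basis theorem) computes a total extension of the universal $\{0,1\}$-valued partial recursive function, so the recursive sets are $A$-subuniform, yet $A$ is not high -- this is exactly why the theorem restricts (v) to r.e.\ degrees. So the implications from the set conditions back to highness, which are the real content of Jockusch's theorem, are missing; they cannot be obtained by a direct decoding and need the full hypotheses.

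There is also a flaw in (i)$\Rightarrow$(ii), which you flag but do not repair correctly. With the pointwise clause ``$\varphi_e(n)$ if it converges within $g(\langle e,n\rangle)$ steps, else $0$'', a row attached to a partial $\varphi_e$ is the characteristic behaviour of $\{n:\varphi_e(n)\downarrow\mbox{ within }g(\langle e,n\rangle)\mbox{ steps}\}$, which for suitable dominating $g$ is a non-recursive ($A$-recursive) set, so $\{P_n\}$ is not equal to the class of recursive functions. Your proposed fix -- taking $g$ to be ``the modulus of a fixed $\emptyset'$-recursive enumeration'' -- cannot work in general: any $A$-recursive modulus of an enumeration of $\emptyset'$ computes $\emptyset'$, and high sets need not be above $\emptyset'$ (there are high minimal degrees). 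The standard repair is different: from $A'\geq_T\emptyset''\geq_T\mathrm{Tot}$ and the limit lemma relative to $A$, take an $A$-recursive $t(e,s)$ with $\lim_s t(e,s)=\mathrm{Tot}(e)$ and let the row follow $\varphi_e$ only while the approximation asserts totality, so that each row is a finite variant of a recursive function; alternatively, keep your $g$ but make the halting condition cumulative (require $\varphi_e(m)\downarrow$ within $g(\langle e,m\rangle)$ steps for \emph{all} $m$ with $|\sigma|\leq m\leq n$), so that every row is either a finite modification of the zero function or equals $\sigma$ followed by $\varphi_e$, and the $\sigma$-padding still makes every total $\varphi_e$ appear. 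With that change (i)$\Rightarrow$(ii) is fine, but the gap described in the first paragraph remains the essential one.
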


\noindent
In the following we study infinitely often parametrisations and 
the relation to computing covers for the recursive sets.

\subsection{Infinitely Often Subuniformity}

\noindent
The next definition generalises from ``Schnorr covering'' to
``covering'' which just says that a martingale succeeds on all
sets of the class
(without having a bound on the time until it has to succeed
infinitely often).

\begin{df} \rm
We say that a set $X$ \emph{covers} a class $\A$ if there is an
$X$-recursive martingale $M$ such that $\A\subseteq S[M]$.
\end{df}

\noindent
Note that for $X$ recursive this is just the definition
of \emph{recursive measure zero}: $\A$ has recursive measure zero if there is a recursive martingale $M$ such that $\A\subseteq S[M]$.

\begin{df} \rm
A class $\A\subseteq 2^\omega$ is called \emph{infinitely often subuniform}
(i.o.\ subuniform for short) if there is a recursive function
$P\in\{0,1,2\}^\omega$ such that
\[
\forall A\in\A \; \exists n \; \big[ \exists^\infty x \big(P_n(x) \neq 2\big)
\wedge \forall x \big( P_n (x)\neq 2 \rightarrow P_n(x) = A(x) \big) \big].
\]
\end{df}
\noindent
That is, for every $A\in\A$ there is a row of $P$ that
computes infinitely many elements of $A$ without making mistakes.
Again, we can relativise this definition to an arbitrary set $X$:
A class $\A$ is i.o.\ $X$-subuniform if $P$ as above is
$X$-recursive.

Let $\COMP$ denote the class of recursive sets.
Recall that $A$ is a PA-complete set if $A$ can
compute a total extension of every $\{0,1\}$-valued
partial recursive function.
Note that if a set $A$ is PA-complete then
$\COMP$ is $A$-subuniform (see Proposition~\ref{implications} below).

For every recursive set $A$ there is a recursive 
set $\hat A$ such that $A$ can be reconstructed from any
infinite subset of $\hat A$. Namely, let $\hat A(x)=1$ precisely when
$x$ codes an initial segment of $A$.
So it might seem that
any i.o.\ sub-parametrisation of $\COMP$ can be converted
into a sub-parametrisation
in which every recursive set is completely represented.
However, we cannot do this uniformly (since we cannot get rid of the
rows that have $P_n(x)=2$ a.e.) and indeed the implication
does not hold.

\begin{prop} \label{implications}
We have the following picture of implications:
$$
\begin{array}{ccccc}
&& \mbox{$A$ is high} & \Rightarrow & \mbox{$A$ has hyperimmune} \\
&&&& \mbox{degree} \\
&& \raisebox{0pt}[16pt][10pt]{$\Downarrow$}  
&& \raisebox{0pt}[16pt][10pt]{$\Downarrow$}   \\
\mbox{$A$ is PA-complete} & \Rightarrow &
\COMP \mbox{ is $A$-subuniform} & \Rightarrow & \mbox{$\COMP$ is i.o.} \\
&&&& \mbox{$A$-subuniform}
\end{array}
$$
No other implications hold than the ones indicated.
\end{prop}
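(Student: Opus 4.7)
The plan is to verify each of the five arrows in the diagram and then exhibit three witness sets whose combined existence rules out every remaining implication between the properties P1 (PA-complete), P2 (high), P3 (hyperimmune degree), P4 ($\COMP$ is $A$-subuniform), and P5 ($\COMP$ is i.o.\ $A$-subuniform).

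For the forward implications: P1 $\Rightarrow$ P4 is obtained by listing uniformly the partial recursive $\{0,1\}$-valued functions $\psi_e$ and using PA-completeness to uniformly $A$-compute total extensions $g_e$; every recursive set already equals some total $\psi_e$ and so appears among the $g_e$. The arrow P4 $\Rightarrow$ P5 is immediate, taking a parametrisation that never outputs $2$. The arrows P2 $\Rightarrow$ P3 and P2 $\Rightarrow$ P4 are respectively Martin's characterisation of highness via dominating functions and Theorem~\ref{Jockusch}. For P3 $\Rightarrow$ P5 I would fix an $A$-computable $f$ undominated by any recursive function and set $P_e(n)=\varphi_e(n)$ whenever $\varphi_e(n)$ halts in at most $f(n)$ steps with value in $\{0,1\}$, and $P_e(n)=2$ otherwise; for any total recursive $\varphi_e$ the recursive running-time function is exceeded by $f$ at infinitely many $n$, yielding infinitely many correct entries with no errors.

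For the non-implications, three witnesses suffice. \textbf{W1} is a PA-complete set of hyperimmune-free degree, obtained by applying the hyperimmune-free basis theorem of Jockusch and Soare to the $\Pi^0_1$ class of $\{0,1\}$-valued DNR functions; since hyperimmune-free precludes computing any function that dominates all recursive functions, W1 is not high, so W1 satisfies P1, P4, P5 but fails P2 and P3. \textbf{W2} is any nonrecursive low c.e.\ set; the settling-time function is $A$-computable and not recursively dominated, so W2 has hyperimmune degree, while the equivalence (i)$\Leftrightarrow$(v) of Theorem~\ref{Jockusch} for r.e.\ degrees forces $\COMP$ not to be $A$-subuniform, and noncompleteness rules out PA-completeness. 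Thus W2 satisfies P3 and P5 but fails P1, P2 and P4. \textbf{W3} is a high noncomplete c.e.\ set, existing by Sacks's density theorem; it satisfies P2 and hence P3, P4, P5, but fails P1 because a c.e.\ set is PA-complete iff Turing-complete.

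Reading off the non-arrows: W1 witnesses that none of P1, P4, P5 implies P2 or P3; W2 witnesses that neither P3 nor P5 implies P1 or P4, and that P3 does not imply P2; W3 witnesses that neither P2 nor P4 implies P1. Together these cover every pair $(P_i,P_j)$ not connected by the diagram. The main technical obstacle is arranging W1 via the hyperimmune-free basis theorem; the other witnesses rest on Jockusch's theorem and standard facts about c.e.\ degrees.
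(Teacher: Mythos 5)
Your proposal is correct and takes essentially the same route as the paper: the same forward arguments (a uniform total extension argument for PA-completeness, an undominated function for hyperimmune degree) and the same three witnesses, namely a PA-complete set of hyperimmune-free degree from the basis theorem, a non-high nonrecursive c.e.\ set handled by the r.e.\ clause of Theorem~\ref{Jockusch}, and a high incomplete c.e.\ set combined with the Scott--Tennenbaum fact that PA-complete c.e.\ degrees are complete. The only slip is attributional: the high incomplete c.e.\ set comes from Sacks's jump inversion theorem (or his direct construction), not from the density theorem, which controls only the ordering of c.e.\ degrees and not their jumps.
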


\begin{proof}
The proposition follows from the following observations.

If $A$ is PA-complete then it can in particular compute
a total extension of the universal $\{0,1\}$-valued partial-recursive
function, hence compute a list of total functions in which every
$\{0,1\}$-valued recursive function appears.

If $A$ is of hyperimmune degree there is an $A$-recursive function
that is not dominated by any recursive function.
This function can be used to compute infinitely many points
from every recursive set, in a uniform way.
More precisely, let $f\leq_T A$ be a function that is not
recursively dominated. If $\varphi_e$ is total then also
$\Phi(x) = \mu s. \varphi_{e,s}(x)\darrow$ is total,
hence $f(x)\geq \Phi(x)$  and $\varphi_{e,f(x)}(x)\darrow$ infinitely often.
For these $x$, let $P_e(x) = \varphi_e(x)$; for the other
$x$, let $P_e(x)=2$.
Then $P\leq_T A$ is a parametrisation and if $\varphi_e$ is total
then $P_e(x) = \varphi_e(x)$ infinitely often.

To see that
$\COMP$ $A$-subuniform does not imply that $A$ is PA-complete,
first note that PA-complete sets cannot have incomplete r.e.\
degree by a result of Scott and Tennenbaum \cite[p513]{Odifreddi}.
Second, by Theorem~\ref{Jockusch},
if $A$ is high then $\COMP$ is $A$-uniform.
So the non-implication follows from the existence of a
high incomplete r.e.\ set (Sacks, see \cite[p650]{OdifreddiII}).

To see that $A$ having hyperimmune degree
does not imply that $\COMP$ is $A$-subuniform,
note that again by Theorem~\ref{Jockusch}
we have for $A$ r.e.\ that  $\COMP$ is $A$-subuniform implies that $A$ is high.
Now let $A$ be r.e.\ nonrecursive (so that in particular
$A$ has hyperimmune degree \cite[p495]{Odifreddi}) and non-high.
Then $\COMP$ is not $A$-subuniform.
In particular, we see from this non-implication that
i.o.\ subuniformity of $\COMP$ does not imply subuniformity.

Finally, it is well-known that $A$ PA-complete does not
imply that $A$ has hyperimmune degree (and hence the weaker notions in the
diagram also do not imply it):
The PA-complete
sets form a $\Pi^0_1$ class, hence, since  the sets of hyperimmune-free
degree form a basis for $\Pi^0_1$ classes \cite[p509]{Odifreddi},
there is a PA-complete set of hyperimmune-free degree.
\end{proof}

\begin{pro} \label{iosubmeasure0}
Every i.o.\ subuniform class has recursive measure zero.
This relativises to:
If $\A$ is i.o.\ $X$-subuniform then $X$ covers $\A$.
\end{pro}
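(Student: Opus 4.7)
The plan is to associate to each row $P_n$ of the parametrisation a martingale $M_n$ that bets according to $P_n$'s predictions, and then combine these into a single recursive martingale by a weighted sum. The key idea is that for every $A\in\A$ there is at least one $n$ such that $P_n$ predicts $A$ at infinitely many places and never predicts wrongly along $A$; the corresponding component $M_n$ therefore doubles its capital infinitely often while never losing capital along $A$, and this behaviour transfers to the weighted sum.

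Concretely, I define $M_n(\lambda)=1$ and proceed inductively on $|\sigma|$: if $P_n(|\sigma|)=2$, set $M_n(\sigma 0)=M_n(\sigma 1)=M_n(\sigma)$, so that the martingale abstains; if $P_n(|\sigma|)=b\in\{0,1\}$, set $M_n(\sigma b)=2M_n(\sigma)$ and $M_n(\sigma(1-b))=0$, so that the martingale bets its full capital on the predicted bit. The averaging condition holds by construction, and since $P$ is recursive, $(n,\sigma)\mapsto M_n(\sigma)$ is uniformly recursive and dyadic-rational-valued. For any $A$ for which row $n$ witnesses the i.o.\ subuniformity condition, $M_n$ never loses capital along $A$ and doubles its capital at each of the infinitely many $x$ with $P_n(x)\in\{0,1\}$, so $M_n(A\restr k)\to\infty$.

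Now set $M=\sum_{n=0}^\infty 2^{-n-1}M_n$. A convex combination of martingales is a martingale, and $M(\lambda)=\sum_n 2^{-n-1}=1$. For every $A\in\A$ there is an $n$ such that $M_n$ succeeds on $A$, and then $M(A\restr k)\geq 2^{-n-1}M_n(A\restr k)\to\infty$, so $\A\subseteq S[M]$.

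The only point that deserves care is checking that $M$ is genuinely a recursive (computable real-valued) martingale rather than just being approximable from below. Since each $M_n(\sigma)\leq 2^{|\sigma|}$, the tail $\sum_{n>N}2^{-n-1}M_n(\sigma)$ is bounded by $2^{|\sigma|-N-1}$, so for any $\sigma$ and any desired precision $\varepsilon$ one computes a sufficiently long partial sum. I expect this to be the only mildly technical step. The relativised statement follows verbatim: if $P$ is $X$-recursive then every step of the construction is $X$-recursive, so $M$ is an $X$-recursive martingale with $\A\subseteq S[M]$, i.e.\ $X$ covers $\A$.
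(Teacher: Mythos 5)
Your proof is correct and follows essentially the same route as the paper's (very brief) argument: a martingale per row that bets full capital on the predicted bits, combined by the usual weighted sum, with the tail bound $M_n(\sigma)\leq 2^{|\sigma|}$ ensuring the sum is a recursive martingale. You have simply filled in the details that the paper leaves as a sketch, and the relativisation goes through verbatim as you say.
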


\begin{proof}
The ability to compute infinitely many bits from a set
clearly suffices to define a martingale succeeding on it.
The uniformity is just what is needed to make the
usual sum argument work.
\end{proof}

\begin{pro} \label{counterexample}
There exists a class of recursive sets that has recursive measure
zero and that is not i.o.\ subuniform.
\end{pro}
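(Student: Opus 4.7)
My plan is to take $\A = S[M]\cap\COMP$, where $M$ is a recursive martingale that succeeds on every sequence of asymptotic density zero: the biased martingale defined by $M(\sigma 0) = \tfrac{3}{2}M(\sigma)$, $M(\sigma 1) = \tfrac{1}{2}M(\sigma)$, $M(\lambda)=1$ does the job, since $\log M(A\restrict n)$ grows linearly in $n$ whenever the $1$-density of $A$ tends to zero. By construction $\A$ is a class of recursive sets of recursive measure zero, so the whole task reduces to showing that $\A$ is not i.o.\ subuniform.

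To that end, given a recursive parametrisation $P\in\{0,1,2\}^\omega$ with rows $P_n$ and supports $R_n = \{k: P_n(k)\neq 2\}$, I will construct a recursive density-zero set $A\in\A$ that \emph{defeats} $P$ in the sense that for every $n$ with $R_n$ infinite, $A(k)\neq P_n(k)$ for some $k\in R_n$. The construction is a priority argument over the dyadic blocks $B_s = [2^s, 2^{s+1})$. At stage $s$, process the requirements $n=0,1,\ldots,s$ in order; for each still unsatisfied $n$, search $B_s$ for some $k\in R_n$ not already reserved at this stage, and if such $k$ is found set $A(k) = 1 - P_n(k)$ and declare $n$ satisfied, otherwise skip. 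All remaining positions take the default value $0$. Each block $B_s$ receives at most $s+1$ commitments, so $|A\cap[0,2^{s+1})| = O(s^{2})$, which forces density zero and hence $A\in S[M]$. The whole procedure is recursive since each stage is a bounded search in a finite block and $P$ is recursive.

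The main obstacle is the verification that every requirement $n$ with $R_n$ infinite eventually becomes satisfied. I will argue by induction on the priority. For each $n'<n$ with $R_{n'}$ infinite, the inductive hypothesis supplies a finite stage at which $n'$ has been satisfied; for each $n'<n$ with $R_{n'}$ finite, there is a bound beyond which $R_{n'}$ meets no further block $B_s$, so $n'$ stops interfering. Past all these thresholds the requirement $n$ faces no competition, and the next block $B_s$ meeting $R_n$ — which exists since $R_n$ is infinite — delivers the required disagreement, defeating $P_n$. Because $\A$ contains such a defeating $A$ for every recursive $P$, it cannot be i.o.\ subuniform.
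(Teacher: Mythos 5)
Your proof is correct, but it follows a genuinely different route from the paper's. The paper argues by reduction: it takes the class of recursive sets $A$ with $A(2x)=A(2x+1)$ for all $x$ (recursive measure zero via the obvious martingale that bets on bit doubling), and observes that any i.o.\ sub-parametrisation $P$ of this class would yield, via $Q_i(x)=\min\{P_i(2x),P_i(2x+1)\}$, an i.o.\ sub-parametrisation of all of $\COMP$; this is impossible because, by Proposition~\ref{iosubmeasure0}, i.o.\ subuniformity of $\COMP$ would give $\COMP$ recursive measure zero, contradicting the classical fact that no recursive martingale succeeds on all recursive sets. You instead work directly: you fix the class $\A=S[M]\cap\COMP$ for the biased martingale $M$, and for each total recursive parametrisation $P$ you build, by a finite-interference construction over the blocks $[2^s,2^{s+1})$, a recursive set $A$ of density zero (hence in $\A$) that disagrees with every row of $P$ having infinite support; rows with finite support fail the i.o.\ condition outright. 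Your verification is sound: since $P$ is total, the supports $R_n$ are decidable, each stage is a bounded search, at most $s+1$ positions per block are committed (giving $O(s^2)$ ones below $2^{s+1}$ and hence density zero and success of $M$), and the induction on priority works because higher-priority requirements act only finitely often. In effect you re-prove, inside your construction, the diagonalisation underlying ``$\COMP$ does not have recursive measure zero,'' arranged so that the diagonal set additionally has density zero and so lands in a measure-zero class. The paper's argument is shorter and leans on Proposition~\ref{iosubmeasure0}; yours is self-contained, needing neither that proposition nor the coding trick, at the cost of an explicit (if injury-free) priority construction.
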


\begin{proof}
The class of all recursive sets $A$ satisfying
$\forall x\,[A(2x)=A(2x+1)]$ has recursive measure $0$
but is not i.o.\ subuniform: If $P$ would witness this class to be
i.o.\ subuniform then $Q$ defined as $Q_i(x) = \min\{P_i(2x),P_i(2x+1)\}$
would witness $\COMP$ to be i.o.\ subuniform, a contradiction.
\end{proof}

\medskip
\noindent
Above the recursive sets, the 1-generic sets are a natural
example of such a class that has measure zero but that is not
i.o.\ subuniform:  It is easy to see that the
1-generic sets have recursive measure zero because
for every such set $A$ there are infinitely many $n$ such
that $A\cap \{n,n+1,\ldots,2n\} = \emptyset$. On the other
hand, a variation of the construction in the proof of
Proposition~\ref{counterexample} shows that
the 1-generic sets are not i.o.\ $X$-subuniform for any~$X$:

\begin{pro} \label{counterexample2}
The 1-generic sets are not i.o.\ $X$-subuniform for any set~$X$.
\end{pro}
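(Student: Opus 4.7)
My plan is to prove this by a Baire category argument that does not require any $X$-recursiveness assumption on the parametrisation: I will show that for \emph{any} function $P:\omega\to\{0,1,2\}$, the collection of sets that $P$ captures i.o.\ is meager in $2^\omega$, whereas the class of 1-generic sets is comeager, so no such $P$ can parametrise the 1-generics i.o. This will establish the conclusion uniformly in $X$.

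The argument has two steps. First, I will recall that the 1-generics form a comeager subset of $2^\omega$: for each r.e.\ set $W_e\subseteq 2^{<\omega}$, the collection of $A$ that either have some initial segment in $W_e$ or have some initial segment all of whose extensions lie outside $W_e$ is open and dense (split on whether a given basic clopen $[\sigma]$ has an extension in $W_e$), and the 1-generics are exactly the intersection of these countably many sets. Second, assuming towards a contradiction that $P$ witnesses i.o.\ $X$-subuniformity of the 1-generics, for each $n$ with $P_n(x)\neq 2$ for infinitely many $x$ I let $\mathcal{C}_n$ be the set of $A\in 2^\omega$ agreeing with $P_n$ at every $x$ with $P_n(x)\neq 2$. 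I will check that each such $\mathcal{C}_n$ is closed with empty interior: given any $\sigma$, picking an $x\geq|\sigma|$ with $P_n(x)\neq 2$ and extending $\sigma$ to a string $\tau$ of length $x+1$ with $\tau(x)=1-P_n(x)$ produces a basic clopen subset of $[\sigma]$ disjoint from $\mathcal{C}_n$. Hence $\bigcup_n\mathcal{C}_n$ (over the relevant rows) is meager, and by hypothesis it contains every 1-generic set, contradicting the Baire category theorem in $2^\omega$.

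There is essentially no obstacle: the whole argument is a short topological computation and never uses the recursiveness of $P$, so \emph{a fortiori} it applies for every oracle $X$. If an effective presentation were desired, one could instead build a specific such $A$ by a finite-extension construction relative to a sufficiently powerful oracle, alternating 1-genericity requirements for each $W_e$ with diagonalisation requirements forcing $A(x)\neq P_n(x)$ at some fresh $x$ with $P_n(x)\neq 2$; this is the sort of variation of Proposition~\ref{counterexample} alluded to just before the statement, but the Baire category argument has the advantage of sidestepping the choice of oracle altogether.
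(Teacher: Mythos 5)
Your proof is correct, and it takes a slightly different route from the paper's. The paper fixes an $X$-recursive parametrisation $P$ and takes as witness any set $A$ that is $1$-generic \emph{relative to $X$}: for each row $P_n$ with infinitely many non-$2$ values, the set of strings $\sigma$ with $\exists x\,[P_n(x)\neq 2 \wedge P_n(x)\neq\sigma(x)]$ is $X$-recursive and dense, so $A$ meets it and $P_n$ makes a mistake on $A$; since $A$ is in particular $1$-generic, $P$ fails. You instead run the classical, non-effective version of the same density idea: for an arbitrary function $P$ (no recursiveness in anything), the sets captured by a row with infinitely many non-$2$ values form a closed nowhere dense class, so the union over all rows is meager, while the $1$-generics are comeager, and the Baire category theorem finishes it. The combinatorial core (density of the diagonalising extensions) is identical, but the framings buy different things: the paper's argument is an effective/relativised category argument that produces an explicit family of witnesses (every set $1$-generic relative to $X$), which is in keeping with the paper's recursion-theoretic viewpoint; your argument is more general and more economical in hypotheses, showing in one stroke that \emph{no} parametrisation whatsoever, of any complexity, can i.o.\ capture the $1$-generics, with the quantification over $X$ handled for free (since any $P$ is $X$-recursive for some $X$ the two formulations are equivalent, but your proof never needs to mention an oracle). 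Your closing remark that an explicit witness could be built by a finite-extension construction relative to a suitable oracle is exactly the paper's route, since such a construction is just the construction of a set $1$-generic relative to $X$.
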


\begin{proof}
Let $P\subseteq\{0,1,2\}^\omega$ be an $X$-recursive parametrisation
and let $A$ be 1-generic relative to $X$
(so that $A$ is in particular 1-generic).
Then for every $n$, if $P_n(x)\neq 2$ for infinitely many $x$
then
\[
\big\{\,\sigma\in 2^{<\omega}: \exists x \,[P_n(x)\neq 2 \wedge P_n(x)\neq \sigma(x)]\,\big\}
\]
is $X$-recursive and dense,
hence $A$ meets this set of conditions and
consequently $P$ does not i.o.\ parametrise~$A$.
\end{proof}

\medskip
\noindent
Now both the example from Proposition~\ref{counterexample}
and the 1-generic sets are counterexamples to the
implication ``measure 0 $\Rightarrow$ i.o.\ subuniform''
because of the \emph{set structure} of the elements in the
class. One might think that for classes closed downwards under
Turing reducibility (that is, for classes defined by information
content rather than set structure) the situation could
be different. For example, one might conjecture that
for $\A$ closed downwards under Turing
reducibility, the implication
``$X$ covers $\A$ $\Rightarrow$ $\A$ i.o.\ $X$-subuniform''
would hold. Note that for $X$ recursive this is
not interesting, since any nonempty class closed downwards under
Turing reducibility contains $\COMP$ and $\COMP$ does not have
recursive measure zero. However, this conjecture is also not true:
Consider the class
\[
\A = \{A: A\leq_T G \mbox{ for some 1-generic } G\}.
\]
Clearly $\A$ is closed downwards under Turing reducibility and it follows
from
proofs by Kurtz~\cite{Kurtz} and by Demuth and Ku\v{c}era~\cite{DemuthKucera}
(a proof is also given by Terwijn~\cite{Terwijn1999}),
that $\A$ is a Martin-L\"of null set and that in particular the halting
problem $K$ covers $\A$.
However, by Proposition~\ref{counterexample2} the 1-generic sets are not
i.o.\ $K$-subuniform so that in particular $\A$ is not
i.o.\ $K$-subuniform.

\subsection{A Nonrecursive Set that does not Cover $\COMP$.}

\noindent
It follows from
Proposition~\ref{implications} and Proposition~\ref{iosubmeasure0}
that if $A$ is of hyperimmune degree then $A$ covers $\COMP$.
In particular every nonrecursive 
set comparable with $K$ covers $\COMP$.
We see that if $A$ cannot cover $\COMP$ then $A$ must have hyperimmune-free
degree. We now show that there are indeed nonrecursive sets 
that do not cover $\REC$. 
Indeed, the following result establishes that there are natural examples
of such sets; this result can be seen as a generalisation of
the result of Calude and Nies \cite{CaludeNies}
that Chaitin's $\Omega$ is wtt-complete and tt-incomplete; 
see Nies' book \cite[Theorem 4.3.9]{Ni09} for more information.

\begin{thm} \label{FS}
If $A$ is Martin-L\"of random then there is no martingale
$M \leq_{tt} A$ which covers \REC. In particular if $A$ is
Martin-L\"of random and of hyper\-immune-free Turing degree
then it does not cover \REC.
\end{thm}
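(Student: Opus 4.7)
My plan is to argue by contradiction: assuming $M=\Phi^A$ with $\Phi$ a truth-table functional covers $\REC$, I will build a Martin-L\"of test capturing $A$. Two technical normalisations come first. Modify $\Phi$ so that $\Phi^B$ is a $c$-savings martingale with $\Phi^B(\lambda)=1$ for \emph{every} oracle $B$, while $\Phi^A$ still covers $\REC$: project any non-compliant $\Phi^B$ back to a martingale by freezing values past the first violation of the averaging identity, and then apply the standard savings conversion. Both modifications are uniformly truth-table and leave $\Phi^A$ unchanged, since $\Phi^A$ was already a martingale. Now define the averaged functional
\[
\bar M(\sigma) := \int \Phi^B(\sigma)\, d\mu(B).
\]
Since the use of $\Phi$ is recursive, each $\bar M(\sigma)$ is a finite sum of tt-computable values, so $\bar M$ is a recursive martingale with $\bar M(\lambda)=1$. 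Because $\REC$ does not have recursive measure zero, the standard diagonal---taking $R(n)\in\{0,1\}$ at each step so that $\bar M(R\restr(n+1))\leq\bar M(R\restr n)$---produces a recursive $R$ with $\bar M(R\restr n)\leq 1$ for every~$n$.

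The heart of the argument is a short estimate on the $B$-side. The savings property gives pointwise in $B$ the bound $\sup_n\Phi^B(R\restr n)\leq \liminf_n\Phi^B(R\restr n)+c$. Fatou's lemma applied to the nonnegative random variables $B\mapsto\Phi^B(R\restr n)$ gives
\[
\int \liminf_n \Phi^B(R\restr n)\, d\mu(B) \;\leq\; \liminf_n\bar M(R\restr n) \;\leq\; 1,
\]
so integrating the savings bound yields $\int\sup_n\Phi^B(R\restr n)\,d\mu(B)\leq 1+c$. Markov's inequality then produces
\[
\mu\bigl\{B : \sup_n \Phi^B(R\restr n)\geq 2^k\bigr\} \;\leq\; (1+c)\,2^{-k}.
\]

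The classes $U_k:=\{B:\sup_n\Phi^B(R\restr n)\geq 2^k\}$ are uniformly $\Sigma^0_1$, since each $\Phi^B(R\restr n)$ is truth-table computable, so after a constant shift in $k$ the family $(U_k)_k$ is a Martin-L\"of test. Because $\Phi^A$ is assumed to succeed on $R$ we have $\sup_n\Phi^A(R\restr n)=\infty$, so $A\in U_k$ for every $k$, contradicting the Martin-L\"of randomness of~$A$. The second assertion is then immediate: if $A$ is of hyperimmune-free degree then by Proposition~\ref{prop:tt} every $M\leq_T A$ satisfies $M\leq_{tt}A$, so the first part applies. The main obstacle in this plan is the opening normalisations: one must verify carefully that the savings conversion and the martingale-totalisation can be performed while preserving truth-table-computability and without destroying the success of $\Phi^A$ on recursive sets. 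Once those are in place, the Fatou/Markov step and the test construction are routine.
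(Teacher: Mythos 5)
Your proposal is correct and follows essentially the same route as the paper: reduce to a tt-functional that yields a savings martingale on every oracle, average over all oracles to obtain a recursive martingale, diagonalise against it to get a recursive adversary set $R$, and use the savings property to turn success of $\Phi^A$ on $R$ into membership of $A$ in a Martin-L\"of test. Your Fatou/Markov estimate is just a repackaging of the paper's direct bound (once $M^E$ exceeds $2^n+1$ on $R$ it stays above $2^n$, so the averaged martingale, which stays below $1$ on $R$, forces $\mu(S_n)\leq 2^{-n}$), so the two arguments coincide in substance.
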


\begin{proof}
Let $A$ be Martin-L\"of random and $M^A$ be truth-table reducible
to $A$ by a truth-table reduction which produces on every oracle
a savings martingale, that is, a martingale which never goes down
by more than $1$. Without loss of generality, the martingale starts
on the empty string with $1$, takes rational values
and is never less than or equal to~$0$.
Note that because of the truth-table property, one
can easily define the martingale $N$ given by
$$
   N(\sigma) = \int_{E \subseteq \omega} M^E(\sigma) \, dE,
$$
where the integration ``dE'' weights all oracles with the
uniform Lebesgue measure.
As one can replace the $E$ by the strings up to $\use(|\sigma|)$
using the recursive use-function $\use$ of the truth-table reduction,
one has that
$$
   N(\sigma) = \sum_{\tau \in \{0,1\}^{\use(|\sigma|)}}
               2^{-|\tau|} M^\tau(\sigma)
$$
and $N$ is clearly a recursive martingale; also the values
of $N$ are rational numbers.
Let $B$ be a recursive set which is adversary to $N$,
that is, $B$ is defined inductively such that
$$
\forall n\,[N(B\restr(n+1)) \leq N(B\restr n)].
$$
Define the uniformly r.e.\ classes $S_n$ by
$$
   S_n = \{E: M^E \mbox{ reaches on $B$ a value beyond } 2^n+1\}.
$$
By the savings property, once $M^E$ has gone beyond $2^n+1$ on $B$,
$M^E$ will stay above $2^n$ afterwards.
It follows that the measure of these $E$ can be at most $2^{-n}$. 
So $\mu(S_n) \leq 2^{-n}$
for all $n$ and therefore the $S_n$ form a Martin-L\"of test.
Since $A$ is Martin-L\"of random, there exists $n$ such that $A\notin S_n$, 
and hence $M^A$ does not succeed on $B$. 
\end{proof}

\medskip
\noindent
The anonymous referee pointed out to the authors that Theorem~\ref{FS}
has a variant which is true when $A$ is Kurtz random. The precise statement
is the following:

\begin{prop}
\begin{enumerate}[{\bf(a)}]
\item If $A$ is Kurtz random then there is no martingale $M \leq_{tt} A$ and no
recursive bound function $f$ which Kurtz cover $\COMP$,
that is, which satisfy that for all $B \in \COMP$ and almost all $n$,
$M(B(0)B(1)\ldots B(f(n))) \geq n$.
\item If $A$ is Schnorr random then there is no martingale $M \leq_{tt} A$ and
no recursive bound function $f$ which Schnorr cover $\COMP$,
that is, which satisfy that for all $B \in \COMP$ and for infinitely many $n$,
$M(B(0)B(1)\ldots B(f(n))) \geq n$.
\end{enumerate}
\end{prop}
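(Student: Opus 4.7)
The plan mirrors the proof of Theorem~\ref{FS}, substituting a Kurtz test for part~(a) and a Schnorr test for part~(b) in place of the Martin-L\"of test used there. Assume without loss of generality that $M$ produces savings martingales on every oracle and that $f$ is strictly increasing; form the recursive averaged martingale $N(\sigma)=\int M^E(\sigma)\,dE$ with $N(\lambda)=1$, and pick a recursive $B$ with $N(B\restr k)\leq 1$ for all~$k$.

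For part~(a), define the clopen sets
\[
G_i \;=\; \{\,E : M^E(B\restr k) \geq 2^i+1 \text{ for some } k \leq f(2^i+1)+1\,\},
\]
depending only on finitely many bits of~$E$ because $M$ is tt-reducible. The savings property reduces membership in $G_i$ to $M^E(B\restr(f(2^i+1)+1))\geq 2^i$, and Markov's inequality applied via $N$ gives $\mu(G_i)\leq 2^{-i}$, so $\{G_i\}$ is a recursive Kurtz test. The Kurtz-covering assumption yields $M^A(B\restr(f(n)+1))\geq n$ for every $n\geq n_0$, and taking $n=2^i+1$ places $A$ in $G_i$ for every sufficiently large~$i$; shifting indices produces a recursive Kurtz test whose intersection contains~$A$, contradicting $A$'s Kurtz randomness.

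For part~(b), the same clopen family $\{G_i\}$ is simultaneously a Schnorr test, since clopen sets automatically have computable rational measure. The obstacle is that Schnorr-covering only supplies $M^A(B\restr(f(n)+1))\geq n$ for infinitely many~$n$, not cofinitely many, so $A\in G_i$ need not hold for every~$i$. I would bridge this gap using the savings property together with the monotonicity of~$f$: infinitely many successes force $\liminf_k M^A(B\restr k)=\infty$ along times of the form $f(n)+1$, and one then captures this divergence by a Schnorr test $T_i=\{E:M^E(B\restr K_i)\geq 2^i\}$ for a suitably chosen recursive sequence~$(K_i)$, each $T_i$ being clopen of computable measure at most~$2^{-i}$. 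The main obstacle will be the choice of $(K_i)$: for $A$ of hyperimmune-free Turing degree the $A$-recursive first-reach times of $M^A$ on~$B$ are dominated by a recursive function so a single recursive $(K_i)$ suffices, or alternatively one invokes the Nies--Stephan--Terwijn theorem that Schnorr and Martin-L\"of randomness agree on hyperimmune-free degrees to reduce this case directly to Theorem~\ref{FS}; for $A$ of hyperimmune Turing degree one must instead exhibit $A$ in a Schnorr-effective countable union of such tests over all recursive candidates for~$(K_i)$, which is the delicate technical step.
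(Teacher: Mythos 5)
Part (a) of your proposal is correct and is essentially the intended adaptation of Theorem~\ref{FS}: average the tt-martingale to get a recursive $N$, take a recursive adversary $B$ with $N(B\restr k)\leq 1$, and use the recursive use bound of the tt-reduction together with the recursive bound $f$ to turn the sets of ``successful'' oracles into explicitly clopen sets of measure at most $2^{-i}$; since ``almost all $n$'' puts $A$ into all but finitely many components, a tail of the test witnesses that $A$ is not Kurtz random. The non-uniform index shift is harmless, and the savings/monotonicity normalisations are standard (Franklin--Stephan), as in the paper's other proofs.

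Part (b), however, has a genuine gap. Your strategy of testing $M^E(B\restr K_i)\geq 2^i$ at \emph{fixed} recursive checkpoints $K_i$ only works when the first-reach times of $M^A$ along $B$ are recursively dominated, i.e.\ essentially in the hyperimmune-free case (where one could anyway quote the fact that Schnorr randomness implies Martin-L\"of randomness and appeal to Theorem~\ref{FS}). For $A$ of hyperimmune degree your proposed repair --- putting $A$ into ``a Schnorr-effective countable union of such tests over all recursive candidates for $(K_i)$'' --- does not work: there is no effective enumeration of the total recursive sequences $(K_i)$, and a union of tests indexed in this way is not itself a Schnorr test, so Schnorr randomness of $A$ yields no contradiction. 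The point you are missing is that a Schnorr test component need not be clopen; it only needs to be $\Sigma^0_1$ with computable measure, and the savings property supplies exactly the required modulus. Concretely, with $D_n=\{E: M^E(B\restr (f(n)+1))\geq n\}$ (clopen, uniformly recursive), set $U_i=\bigcup_{n\geq 2^{i+1}} D_n$. Then $A\in U_i$ for every $i$ by the infinitely-often success assumption; $\mu(U_i)\leq 2^{-i}$ because any $E\in U_i$ has $\sup_k M^E(B\restr k)\geq 2^{i+1}$ and hence, by savings and Markov's inequality applied to $N$, lies in a set of measure at most $(2^{i+1}-1)^{-1}$; and $\mu(U_i)$ is computable because, again by savings, $\bigcup_{2^{i+1}\leq n\leq m'} D_n\subseteq\{E: M^E(B\restr (f(m')+1))\geq m\}$ for all $m'\geq m\geq 2^{i+1}$, so the tail $\bigcup_{n>m}D_n$ contributes at most $1/m$, giving an effective error bound for the finite clopen approximations. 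This makes $(U_i)$ a recursive Schnorr test containing $A$, contradicting Schnorr randomness of $A$ with no case split on hyperimmunity.
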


\noindent
Note that a weakly $1$-generic set $A$ is Kurtz random
and coincides on arbitrarily long parts with any given recursive
set $B$, so the martingale which bets half of the capital on the next digit
of $A$ and $B$ to be the same will Schnorr cover all recursive sets $B$.
Therefore one has to use ``Kurtz cover'' for part (a). The observation
of the referee allows then to conclude that the truth-table degrees of
weakly $1$-generic and Schnorr random sets can never be the same, as the
first ones Schnorr cover $\COMP$ and the second ones don't.

We note that the set
$\bigset{A \in \{0,1\}^\omega: A \mbox{ covers } \COMP}$ has measure~1.
This follows from Proposition~\ref{implications} and the fact that the hyperimmune
sets have measure~1 (a well-known result of Martin, 
see \cite[Theorem 8.21.1]{DH}). 

We note that apart from the hyperimmune degrees, there are other degrees
that cover~\COMP.

\begin{pro}
There are sets of hyperimmune-free degree that cover the class $\COMP$.
\end{pro}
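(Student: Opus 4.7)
The plan is to combine two facts that are already established in the excerpt: (a) the hyperimmune-free basis theorem for $\Pi^0_1$ classes yields a PA-complete set of hyperimmune-free degree, and (b) by Proposition~\ref{implications}, any PA-complete set $A$ has the property that $\COMP$ is $A$-subuniform, hence in particular i.o.\ $A$-subuniform, and then Proposition~\ref{iosubmeasure0} promotes this to the conclusion that $A$ covers $\COMP$.

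More concretely, first I would fix a PA-complete set $A$ of hyperimmune-free degree. Such a set exists because the class of PA-complete sets (equivalently, the class of total $\{0,1\}$-valued extensions of a fixed universal partial recursive $\{0,1\}$-valued function) is a nonempty $\Pi^0_1$ class, and the sets of hyperimmune-free degree form a basis for $\Pi^0_1$ classes; this is exactly the fact invoked at the end of the proof of Proposition~\ref{implications}.

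Second, as also noted in the proof of Proposition~\ref{implications}, from a PA-complete oracle $A$ one can compute an extension of the universal $\{0,1\}$-valued partial recursive function and hence an $A$-recursive parametrisation $P$ such that every recursive $\{0,1\}$-valued function appears as some row $P_n$. This witnesses that $\COMP$ is $A$-subuniform, and in particular i.o.\ $A$-subuniform (take the same $P$, regarded as taking values in $\{0,1,2\}$; the condition $P_n(x)\neq 2$ holds for all $x$ on the rows that represent recursive sets).

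Third, I would invoke Proposition~\ref{iosubmeasure0} relativised to $A$: since $\COMP$ is i.o.\ $A$-subuniform, $A$ covers $\COMP$, i.e.\ there is an $A$-recursive martingale that succeeds on every recursive set. Since $A$ was chosen of hyperimmune-free degree, this completes the proof. There is no genuine obstacle here, since all the ingredients have been proved earlier in the paper; the only thing to check is that the implication chain ``PA-complete $\Rightarrow$ $\COMP$ is $A$-subuniform $\Rightarrow$ $\COMP$ is i.o.\ $A$-subuniform $\Rightarrow$ $A$ covers $\COMP$'' survives the relativisation, which it plainly does, as each step is uniform in the oracle.
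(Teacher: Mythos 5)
Your proposal is correct and follows essentially the same route as the paper: take a PA-complete set $A$ of hyperimmune-free degree via the hyperimmune-free basis theorem, note that $\COMP$ is then $A$-subuniform (hence i.o.\ $A$-subuniform), and apply Proposition~\ref{iosubmeasure0} relativised to $A$ to conclude that $A$ covers $\COMP$.
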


\begin{proof}
As in Proposition~\ref{implications}, take a PA-complete set $A$ of
hyperimmune-free degree. Then the recursive sets are
$A$-subuniform, so by Proposition~\ref{iosubmeasure0} $A$ covers $\COMP$.
\end{proof}

\subsection{Computing Covers versus Uniform Computation}

\noindent
We have seen above that in general the implication
``$X$ covers $\A$ $\Rightarrow$ $\A$ i.o.\ $X$-subuniform''
does not hold, even if $\A$ is closed downwards under Turing reducibility.
A particular case of interest is whether there are sets that can cover 
$\COMP$ but relative to which $\COMP$ is not i.o.\ subuniform.

\begin{thm} \label{separation}
There exists a set $A$ that Schnorr covers $\COMP$ but relative to which
$\COMP$ is not i.o.\ $A$-subuniform.
\end{thm}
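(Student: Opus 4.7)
The plan is to build $A$ by a tree construction in the style of Theorem~\ref{firstStatement}, arranging simultaneously that (i) $A$ computes a function in $\DNR^\psi_h$ for the $\psi,h$ of Theorem~\ref{BjoernFrank}; (ii) $A$ is of hyperimmune-free degree; and (iii) for every truth-table reduction $\Phi_k$ there is a recursive set $B_k$ such that no row of $\Phi_k^A$ i.o.\ parametrises $B_k$. Item (i), together with Proposition~\ref{engulf}, makes $A$ weakly Schnorr cover $\COMP$. By (ii) and Proposition~\ref{prop:tt}, every $P\leq_T A$ is already $\leq_{tt}A$, so (iii) is enough to prevent $\COMP$ from being i.o.\ $A$-subuniform.

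For each $k$, requirement $R_k$ processes the potential row indices $n=0,1,2,\ldots$ in turn. For the current $n$ it searches for a stage $t$ and an input $\langle n,x\rangle$ at which every string $\sigma$ currently surviving on the tree $T_t$ at a length exceeding the tt-use gives $\Phi_k^\sigma(\langle n,x\rangle)=v$ with the same value $v\neq 2$; once found, $R_k$ commits $B_k(x):=1-v$ and moves on to $n+1$. Whenever currently surviving strings disagree on the value of $\Phi_k^\sigma(\langle n,x\rangle)$, enough strings of the relevant active interval are killed from $T_t$ to force agreement, exactly as in the killing mechanism of Theorem~\ref{firstStatement}. Priorities and recursive fractions $r_k$ with $\sum_k r_k<1/2$ bound the fraction of strings each $R_k$ may kill on any active interval $I_n$; the $\DNR^\psi_h$ constraint only forbids one value per interval, so enough strings remain for $T=\bigcap_t T_t$ to be an infinite recursive finitely-branching tree whose paths are all hyperimmune-free.

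The main obstacle is making each $B_k$ genuinely recursive: the commitment $B_k(x)\neq v$ must be issued only once $v$ is unavoidable on every oracle still surviving on $T_t$, so that subsequent killings cannot change the value of $\Phi_k^A(\langle n,x\rangle)$. This is handled, as in Theorem~\ref{firstStatement}, by waiting for the relevant tt-computations to stabilise across the current tree before acting, and by scheduling each $R_k$ so that it acts infinitely often. The combinatorial accounting---verifying that enough strings survive in each $I_n$, that each $R_k$ gets permanently initialised, and that every nontrivial row of $\Phi_k^A$ is eventually diagonalised against---is then essentially the same bookkeeping as in the proof of Theorem~\ref{firstStatement}, with the martingale-killing strategy replaced by the row-diagonalisation strategy above and augmented by the $\Pi^0_1$ condition $f\in\DNR^\psi_h$.
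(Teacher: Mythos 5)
There is a genuine gap at the central step of your construction, namely the claim that whenever the surviving strings disagree on $\Phi_k^\sigma(\langle n,x\rangle)$ you can ``kill enough strings of the relevant active interval \ldots exactly as in the killing mechanism of Theorem~\ref{firstStatement}''. That mechanism does not transfer. In Theorem~\ref{firstStatement} the quantities compared across surviving strings are values of a martingale: they are non-negative and their sum over the relevant oracles is controlled, so only about $r_k\cdot 2^{|I_n|}$ strings can carry a near-maximal value, and killing exactly those fits the budget $\sum_{k,c} r_k=1/2$ per interval. For an arbitrary tt-reduction there is no analogous counting argument. Take, for instance, a row whose value at every input is $\sigma(j_1)\oplus\sigma(j_2)$ for bits $j_1,j_2$ lying in two different still-active intervals: making this constant on the surviving tree (which your bookkeeping keeps product-structured, interval by interval) forces you to collapse one of those intervals to a single string, i.e.\ to kill $2^{|I|}-1$ of its $2^{|I|}$ strings --- for one action of one requirement, while every requirement must act infinitely often. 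This destroys the per-interval survivor counting, and that counting is the only thing keeping your tree inside $\DNR^\psi_h$ nonempty: the class $\DNR^\psi_h$ for the slow bound $h$ of Theorem~\ref{BjoernFrank} has measure zero (the products $\prod_m(1-2^{-|I_m|})$ vanish), so no measure-largeness argument can replace it. If instead you do not force and merely wait for unanimity, the adversary reduction never grants it, you never learn $\Phi_k^A(\langle n,x\rangle)$, and nothing guarantees that this row fails to i.o.\ parametrise $B_k$ along the particular path $A$ you eventually select (and note that not \emph{all} paths of your tree can be of hyperimmune-free degree --- a special $\Pi^0_1$ class always has nonrecursive $\Delta^0_2$ members, which are of hyperimmune degree --- so satisfaction really must be arranged for the chosen path). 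A secondary, more repairable omission: if a row never makes a non-$2$ prediction your strategy is stuck on it and $B_k$ stays finite, hence is not a recursive set; some padding device is needed so that $B_k(x)$ is always eventually defined.

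The paper's proof of Theorem~\ref{separation} avoids this tension by not using Khan--Miller or Theorem~\ref{BjoernFrank} at all. It builds a partial recursive $\{0,1\}$-valued $\psi$ such that every total extension coincides with every recursive set on infinitely many intervals $I_n=\{3^n,\ldots,3^{n+1}-1\}$; the martingale that simply copies $A$ then Schnorr covers $\COMP$. Because the covering requirement only needs infinitely many coincidence intervals, there is no per-interval budget: when a diagonalisation action acts, it may define $\psi$ on \emph{all} currently undefined positions below the tt-use according to a chosen extension $E$, which pins down $\Phi_i(x)$ for every later extension, and it then sets $B(x)=1-\Phi_i^E(x)$. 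Totality of $B$ is secured by working with ``admissible'' reductions padded with rows computing all finite sets, a finite-injury argument bounds how often each undefined interval moves so the coincidence requirements still succeed, and hyperimmune-freeness is obtained only at the end by taking a hyperimmune-free total extension of $\psi$ (the extensions form a $\Pi^0_1$ class), with Proposition~\ref{prop:tt} used exactly as in your plan. In short, your route ties the covering side to a fragile per-interval counting that the value-forcing needed for the diagonalisation must violate; to salvage it you would need a genuinely new method of forcing tt-values while remaining inside the measure-zero $\DNR^\psi_h$ class and keeping each $B_k$ recursive, whereas the paper replaces that covering mechanism by one that tolerates wholesale commitment of the oracle.
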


\begin{proof}
We construct the set $A$ by choosing a total extension of hyperimmune-free
Turing degree of a partial-recursive $\{0,1\}$-valued function $\psi$
built by a finite injury construction. 
In the following, we will consider parametrisations computable by $A$. 
Because $A$ is of hyperimmune-free degree, for every Turing reduction to $A$ 
there is an equivalent truth-table reduction to $A$
by Proposition~\ref{prop:tt}, 
so it will be sufficient to only consider the latter. 
We will consider tt-reductions $\Phi^E$ that compute 
i.o.\ parametrisations relative to an oracle $E$, and we denote 
the $i$-th component of such a parametrisation by~$\Phi_i^E$.

Let $I_0 = \{0,1,2\}$ and, for $n>0$, $I_n = \{3^n,3^n+1,\ldots,3^{n+1}-1\}$.
Now if $A$ coincides with a set $B$ on infinitely many intervals $I_n$ then
the martingale which always puts half of its money onto the next bit
according to the value of $A$ succeeds on $B$, indeed, it even
Schnorr succeeds on $B$. The reason is that if $I_n$ is such an
interval of coincidence, then at least $3^{n+1}-3^n$ of the bets
are correct and the capital is at least 
$3^{3^{n+1}-3^n}/2^{3^{n+1}} = (9/8)^{3^n}$, 
as it multiplies with $3/2$ at a correct bet and halves
at an incorrect bet. Thus the overall goal of the construction
is to build a partial recursive function $\psi$ with the following 
properties:
\begin{itemize}
\item $\psi$ coincides with every recursive set on infinitely many $I_n$,
      and therefore every total extension $E$ of $\psi$ Schnorr covers $\COMP$;
\item For every truth-table reduction $\Phi$ there is a recursive set $B$
      such that $\{B\}$ is not i.o.\ subuniform for any total extension $E$
      of $\psi$ via $\Phi$:
      $$ (*) \ \ \ \ 
         \forall i\,(\forall {}^\infty x\,[\Phi_i^E(x) = 2] \vee
                     \exists x\,[\Phi_i^E(x) = 1-B(x)]).
      $$
\end{itemize}
To simplify the construction, we define a list of admissible 
truth-table reductions, which are all truth-table reductions $\Phi$ 
that satisfy one of the following two conditions:
\begin{enumerate}[(1)]
\item \label{either}$\Phi$ is total for all oracles and computes a sequence
      $\Phi_0,\Phi_1,\ldots$ of $\{0,1,2\}$-valued functions such that
      for all oracles $E$, $\Phi_{2i}^E$ is the characteristic function
      of the $i$-th finite set.
\item\label{or} $\Phi^E_i$ is partial for all $i$ and all $E$, and the set
      $\{(i,x): \Phi^E_i(x)$ is defined for some $E\}$ is finite.
\end{enumerate}
It is easy to see that there is an effective list of all admissible
truth-table reductions. Condition (\ref{either}) includes
that all $\Phi_i^E$ for even $i$ follow finite sets; this is needed
in order to avoid that the construction of the set $B$ gets stuck;
it is easy to obtain this condition by considering a join of a given
truth-table reduction with a default one computing all finite sets.
The inclusion of the partial reductions is there in order to account for
the fact that there is no recursive enumeration of all total recursive
functions and thus also no recursive enumeration of all total truth-table
reductions. So Condition (\ref{or}) is needed to make the enumeration effective.

There will be actions with different priority; whenever several actions
apply, the one with the highest priority represented by the lowest
natural number will be taken. Here $\psi_s$ at stage $s$ is for each
interval either defined on the whole interval or undefined on the whole
interval, and it is defined only on finitely many intervals; furthermore,
$J_n$ refers to the $n$-th interval where $\psi_s$ is undefined and
$c_n$ refers to the number of arguments where $\psi_s$ is undefined
below $\min(J_n)$. The following actions can be taken at stage $s$
with priority $n = \max\{i,j\}$, for the parameters $i,j$ given below:
\begin{enumerate}
\item The action {\em requires attention\/} if $\varphi_{i,s}(x)$ is defined
      for all $x \leq \max(J_n)$ and
      if there are exactly $j$ intervals $I_k$ with $\max(I_k) < \min(J_n)$
      and $\psi_s(y)\!\downarrow = \varphi_{i,s}(y)\!\downarrow$
      for all $y \in I_k$. In this case the action requires attention with
      priority~$n$. If the action receives attention then we let 
      $\psi_{s+1}(x) = \varphi_{i,s}(x)$ for all $x \in J_n$;
      thus each $J_m$ with $m \geq n$ will move to $J_{m+1}$.
\item Let $\Phi$ be the $j$-th admissible truth-table reduction and
      let $x$ be the least value where the set $B$ defined alongside $\Phi$
      (to satisfy $(*)$ above) has not yet been defined by stage $s$, 
      and let $E$ be a total extension of $\psi_s$.
      The action requires attention if the following three conditions hold:
      \begin{itemize}
      \item $\Phi_k^F(y)$ is defined for all oracles $F$,
            all $y \leq x$ and all $k \leq n$ by stage $s$ with use $s$,
      \item $\forall y < x \,[\Phi_i^E(y) \in \{B(y),2\}]$,
      \item $\Phi_i^E(x) \in \{0,1\}$.
      \end{itemize}
      If it receives attention then one defines
      $B(x) = 1 - \Phi_i^E(x)$ and for all $m \geq n$ with
      $\min(J_m) \leq s$ and all $y \in J_m$ one defines
      $\psi_{s+1}(y) = E(y)$ and therefore all the intervals
      $J_m$ with $m \geq n$ are moved beyond $s$.
\end{enumerate}
At stage $s$ the algorithm chooses an action with highest priority (= least
numerical value of the priority number) that can be taken (if any), and the 
algorithm does not change anything if there is no action which can be taken;
in the case that several actions can be taken with the same highest
priority, it uses some default ordering (length-lexicographical ordering
of some coding of the actions) in order to decide which one to do.
As mentioned at the beginning, we let $A$ be any extension of the so 
constructed function $\psi$ of hyperimmune-free Turing degree. 
Such $A$ exists by the standard construction of a hyperimmune-free degree by 
Miller and Martin, see \cite{Odifreddi}.

The first part of the verification consists of inductively proving
the following for each~$n$:
\begin{itemize}
\item The number 
      $c_n = \sum_{m<n} |J_m|$ increases only finitely often, 
      and after some stage $s$, no action of priority $m<n$
      is taken and none of the intervals $J_m$ with $m<n$ moves again;
\item After this stage $s$, the number of times that an action of
      priority $n$ will be taken is at most $(2^{c_n}+1) \cdot (2n+1)$,
      and the interval $J_n$ will only be moved when an action of
      priority $n$ acts, that is, it will also be moved only finitely often.
\end{itemize}
Note that the first item is the induction hypothesis and the proof
of the second is the inductive step; for $n=0$ the first hypothesis
is void and therefore satisfied. When an action of priority $n$ is taken,
only the values of $\psi$ in some intervals $J_m$ with $m \geq n$ will
be filled, and therefore only the intervals $J_n$ and beyond will be moved.

First consider actions of priority $n$ which are of type $1$, that is,
for which $J_n$ gets defined according to some $\varphi_i$. Here
$n = \max\{i,j\}$ where $j$ is the number of intervals $I_k$ below $J_n$
where $\psi$ and $\varphi_i$ are both defined by stage $i$ and equal.
Whenever the action is taken and $J_n$ is moved afterwards,
the number $j$ increases by $1$; hence it happens at most $2n+1$ times
that $\psi$ is defined on $J_n$ to be equal to some $\varphi_i$ 
by an action of type~$1$: For each $i$ there is one action in the
case that $i<n$ and $j = n$, and $n+1$ actions when $i=n$ and
$j=0,1,\ldots,n$.

Now consider actions of priority $n$ which are of type $2$.
For each pair $(i,j)$ with $\max\{i,j\}=n$ and each $E$ extending $\psi$,
there is at most
one action where the $j$-th admissible truth-table reduction $\Phi$ applies
and $B(x)$ is set to be different from the value $\Phi_i^E(x)$ because
all $y<x$ satisfy $\Phi_i^E(y) \in \{B(y),2\}$; furthermore, if this
action is made subsequently for two different sets $E$ and $F$
with the same $(i,j)$ then $E(z) \neq F(z)$ for some $z \in \cup_{m<n} J_m$
(this set does not change after stage $s$) and therefore there are for
each $(i,j)$ at most $2^{c_n}$ such actions, giving the overall
upper bound of $2^{c_n} \cdot (2n+1)$ actions of type $2$ with priority $n$
after stage $s$.

The sum of the two calculated upper bounds
gives the overall upper bound $(2^{c_n}+1) \cdot (2n+1)$
of actions of priority $n$ carried out after stage $s$.
Thus the inductive step is completed.

It is clear that actions of the first type are carried out eventually
for all $n$ in the case that $\varphi_i$ is total and $\{0,1\}$-valued.
Thus each recursive set coincides with each total extension of $\psi$
on infinitely many intervals; in particular $A$ does so and therefore
$A$ is weakly Schnorr covering, as explained at the beginning of the proof.

Now consider the $j$-th admissible truth-table reduction and assume
that it is total. We observe that the set $B$ gets defined for
every $x$, as for each $x$ there exists an $i$ such that $\Phi_i$ is
$\{0,1\}$-valued and coincides with $B$ below $x$, hence $B(x)$ will
be defined and either diagonalise $\Phi_i^A$ at $x$ or diagonalise
some other set with some other oracle. Choose any $i$ and let $s$
be so large that no action of priority $\max\{i,j\}$ or less will
take place at or after stage $s$. Then there is no $x > s$ such
that the following two conditions are satisfied at the state $t$
where $B(x)$ gets defined:
\begin{itemize}
\item $\Phi^A_i(x) \in \{0,1\}$ and
\item $\forall y < x\,[\Phi^A_i(y) \in \{B(y),2\}]$.
\end{itemize}
The reason is that if these two conditions would be satisfied then
an action of priority $\max\{i,j\}$ would qualify and enforce that some
action of this or higher priority has to be carried out; by assumption
on $s$ this does however not happen. Therefore either $\Phi^A_i$
is inconsistent with $B$ and there is an $x$ with $\Phi_i^A(x) = 1-B(x)$,
or all $x>s$ satisfy $\Phi^A_i(x) = 2$. 
We conclude that the $j$-th admissible reduction $\Phi$ does not 
witness that $\COMP$ is i.o.\ $A$-subuniform and, as $j$ was arbitrary,
$\COMP$ is not i.o.\ $A$-subuniform.
\end{proof}

\medskip
\noindent
Theorem~\ref{separation} shows in particular that there are 
sets $A$ covering $\REC$ for which $\REC$ is not i.o.\ $A$-subuniform. 
Thus we see that there are sets that cover $\COMP$ 
``truly probabilistically''. 

Theorem~\ref{separation} also has a counterpart.
Recall that Terwijn and Zambella \cite{TZ01} showed 
(reformulating using the results of Franklin~\cite{Fr08})
that no Schnorr trivial $A$ of hyperimmune-free Turing degree is
weakly Schnorr covering; actually they showed that every class of sets Schnorr
covered by an $A$-recursive martingale $M$ with bound $f$
is already Schnorr covered by a recursive $N$ with recursive
bound $g$, and the just mentioned observation follows from
the fact that there is no recursive martingale covering all recursive sets.

The next result completes the picture from Theorem~\ref{separation} 
that the two notions ``$\COMP$ is i.o.\ $A$-subuniform'' and 
``$A$ is weakly Schnorr covering'' (which means that $A$ Schnorr covers $\REC$)
are incomparable. Note that they are both implied by PA-complete and by
hyper\-immune and they both imply ``$A$ covers $\REC$''.

\begin{thm} \label{separation2}
There exists a Schnorr-trivial and hyper\-immune-free set $A$
such that $\COMP$ is i.o.\ $A$-subuniform but $A$
does not Schnorr cover $\COMP$.
\end{thm}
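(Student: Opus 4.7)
The plan is to construct a set $A$ that is simultaneously (i) hyperimmune-free, (ii) Schnorr trivial, and (iii) such that $\COMP$ is i.o.\ $A$-subuniform. Once these three conditions hold, the fourth assertion---that $A$ does not Schnorr cover $\COMP$---is automatic from the Terwijn--Zambella result recalled just before the theorem statement, since no Schnorr trivial set of hyperimmune-free degree is weakly Schnorr covering.

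The construction will follow the blueprint of the proof of Theorem~\ref{separation}. I would build a partial recursive $\{0,1\}$-valued function $\psi$ by a finite-injury argument, accompanied by a descending sequence of recursive trees $T_s \subseteq 2^{<\omega}$ representing valid total extensions of $\psi_s$ subject to the killings performed during the construction. The set $A$ is then obtained as a branch of $T = \bigcap_s T_s$ of hyperimmune-free degree via the Miller--Martin basis theorem. Two families of requirements control the construction. First, for each pair $(e, k)$, the requirement $R_{e, k}^1$ reserves a fresh interval $I_n$ and, after a sufficient delay, commits $\psi \restrict I_n$ to a specific string encoding the pair $(e, \varphi_e \restrict I_n)$ together with a validity marker. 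From $A$, one reads off an $A$-recursive parametrization $P^A$ by decoding the marker and value bits on each reserved interval; for each total recursive $\varphi_e$, infinitely many such intervals get committed correctly, yielding the desired i.o.\ agreement. Second, for each truth-table reduction $\Phi$, requirement $R_\Phi^2$ kills excess branches of $T_s$ so as to maintain the invariant that the number of length-$\ell$ strings in $T_s$ is bounded by a recursive function $g_\Phi(\ell)$; by the Franklin--Stephan characterization of Schnorr triviality, combined with padding, this gives that every $f \leq_{tt} A$ admits a recursive trace of size $n+1$ at input $n$, so $A$ is Schnorr trivial.

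The main obstacle is making the two families of requirements cooperate. Each commitment by an $R_{e, k}^1$ further restricts $T_s$, which is helpful for $R_\Phi^2$, but one must ensure that the particular strings $R_{e, k}^1$ wants to commit to have not already been killed by higher-priority $R_\Phi^2$ actions. This is arranged by fixing in advance, for each interval, a short list of ``reserved'' strings which the $R_{e, k}^1$ requirements are allowed to use, and by requiring $R_\Phi^2$ to respect these reservations when killing. Standard finite-injury priority management then suffices to verify, by induction on priority as in the verification of Theorem~\ref{separation}, that each requirement is satisfied in the limit and that $T$ remains a non-empty recursive tree; Miller--Martin applied to $T$ then produces a branch $A$ of hyperimmune-free degree satisfying (i)--(iii).
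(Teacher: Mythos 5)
Your outer reduction coincides with the paper's: it suffices to produce a hyperimmune-free, Schnorr trivial $A$ with $\COMP$ i.o.\ $A$-subuniform, and then ``not weakly Schnorr covering'' follows from Terwijn--Zambella together with Franklin's result. The genuine gap lies in your mechanism for Schnorr triviality. First, the invariant you ask $R^2_\Phi$ to maintain --- that the number of length-$\ell$ strings on $T_s$ is bounded by a recursive $g_\Phi(\ell)$ --- cannot hold as stated: at any stage $s$ the partial function $\psi_s$ is defined on only finitely many intervals, so beyond them $T_s$ branches fully and contains exponentially many strings of each length; the width only drops when $\psi$ later becomes defined or branches are killed, and in a finite-injury construction there is no recursive bound on when that happens. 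Since a recursive trace for $\Phi^A(n)$ must be read off at a stage computable from $n$, at that stage you only see the wide approximation, so the tree gives no usable size bound. Second, even if you did secure a bound of the form $g(u(n))$, with $u$ the use of the truth-table reduction $\Phi$, the appeal to ``padding'' to reach the Franklin--Stephan bound $n+1$ does not go through: blocking $f$ into segments replaces $\Phi$ by a new reduction whose use, and hence whose trace bound, grows with the blocks, so the required inequality $g(u(k_n))\leq k_{n-1}+2$ chases its own tail. The standard rescaling of trace bounds needs one fixed bound valid for all $f\le_{tt}A$, chosen in advance, not a use-dependent one; so ``bounded tree width plus padding implies Schnorr trivial'' is precisely the missing idea, not a routine step.

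A secondary gap concerns i.o.\ subuniformity: your decoder must never make a wrong prediction, but on intervals that were never committed for $e$ (or committed for some other index) the branch delivered by Miller--Martin may imitate a valid marker, and infinitely many such false positives for a single $e$ cannot be absorbed; the ``reserved strings'' device does not address this. For comparison, the paper avoids tree bookkeeping altogether: it builds a recursive $f$ and a partial recursive $\psi$ guided by the modulus of convergence $c_\Omega$ of Chaitin's $\Omega$, so that $\psi(n)\simeq\Parity(\varphi_{f(n)}(n))$ at the places with $f(n)=e$ except for a set of undefined places so sparse that the domain of $\psi$ is dense simple; by a theorem of Franklin and Stephan every total extension of such a $\psi$ is Schnorr trivial, hyperimmune-freeness comes from choosing the extension by the basis theorem, and i.o.\ subuniformity is witnessed by the $A$-recursive predictors $Q_e$ (guessing $A(n)$ exactly where $f(n)=e$, a recursive set of places) together with all their finite variants, which absorb the finitely many errors per $e$. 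If you want to salvage your approach, you would need to replace the width argument by something like this dense-simple-domain criterion and make the set of prediction places for each $e$ recursive with only finitely many errors.
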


\begin{proof}
Let $c_\Omega$ be the modulus of convergence
of Chaitin's $\Omega$.
The function $c_\Omega$ is approximable from below and
dominates every recursive function.
We construct a recursive function $f$ and a partial recursive
$\{0,1\}$-valued function $\psi$ such that the following
conditions are met:
\begin{itemize}
\item For each $e$ there is at most one $n$ with
      $f(n) = e$ and $\psi(f(n))$ being undefined;
\item For each total $\{0,1\}$-valued
      $\varphi_e$ there are infinitely many $n$
      with $f(n) = e$;
\item For all $n$, if $c_\Omega(f(n)) \geq n$ then $\psi(n)$
      is defined else $\psi(n)\!\simeq\!\Parity(\varphi_{f(n)}(n))$.
\end{itemize}
The oracle $A$ will then be
fixed as a hyperimmune-free total extension of $\psi$.

The recursive function
$f$ can be defined inductively with monitoring $\psi$
on the places below $n$ where $f(n)$ is
the coordinate $e$ of the least pair $(d,e)$ such that
there are exactly $d$ many $m < n$ with $f(m) = e$
and $\psi(m)$ being defined for each of these $m$ within
$n$ computation steps. There are the following two cases:
\begin{itemize}
\item There are infinitely many $n$ with $f(n) = e$.
      Then $\psi(n)$ is defined for all these~$n$ and
      $\psi(n) = \Parity(\varphi_e(n))$ for almost all of these $n$.
\item There are only finitely many $n$ with $f(n) = e$.
      Then $\psi(n)$ is undefined only on the largest of these $n$
      and this $n$ also satisfies that $\varphi_e(n)$ is undefined.
\end{itemize}
Furthermore, we define $\psi(n)$ by the first of the
following two searches that halt:
\begin{itemize}
\item If $\varphi_{f(n)}(n)$ converges then one tries to define that
      $\psi(n)$ is $\Parity(\varphi_{f(n)}(n))$;
\item If $c_\Omega(f(n)) \geq n$ then one tries to define that $\psi(n)=0$.
\end{itemize}
Thus if $\varphi_e$ is $\{0,1\}$-valued and total then the
first case applies and $\varphi_e(n) = \psi(n)$ for almost
all $n$ where $f(n) = e$.

One now makes a family $P_d$ consisting of all finite variants of
functions $Q_e$ which defined which are defined as follows:
If $f(n) = e$ then $Q_e(n) = A(n)$
else $Q_e(n) = 2$. Note that the $Q_e$ are uniformly recursive
in $A$ and so are the $P_d$. Furthermore, as for each total
and $\{0,1\}$-valued $\varphi_e$
the function $Q_e$ is correct on almost all of its infinitely many
predictions, one finite variant $P_d$ of $Q_e$ will coincide
with $\varphi_e$ on all of its predictions. Thus $\COMP$ is
i.o.\ $A$-subuniform.

The function $\psi$ has below $c_\Omega(e)$ only
undefined places at $n$ with $f(n) < e$ and for
each possible value of $f$ below $e$ at most
one undefined place, hence the domain of $\psi$ is
dense simple (see \cite{OdifreddiII} for the definition). 
By a result of Franklin and Stephan \cite{FS10}, the
total extension $A$ of $\psi$ is Schnorr trivial.
As $A$ has also hyperimmune-free degree, $A$ is not
weakly Schnorr covering, by the results of 
Terwijn and Zambella~\cite{TZ01} discussed above. 
\end{proof}

\medskip
\noindent
This construction has a relation to \cite[Question 4.1(8)]{Brendle.Brooke.ea:14}
which could be stated as follows:
\begin{quote}
Is there a $\DNR$ and hyperimmune-free set which neither computes a Schnorr
random nor is weakly Schnorr covering?
\end{quote}
Note that the original question
of the authors asked for weakly meager covering in place of $\DNR$; however,
weakly meager covering together with not weakly Schnorr covering implies
both $\DNR$ and hyperimmune-free while, for the other way round, $\DNR$ implies
weakly meager covering. Thus the formulation given here is equivalent
to the original question.

So let $f$, $\psi$ be as in the proof of Theorem \ref{separation2}, and
let $\vartheta$ be the following numbering:
If $n \neq m$ then $\vartheta_n(m) = \varphi_{f(n)}(m)$ else
$\vartheta_{n}(n)$ is obtained by monitoring the definition of
$\psi$ and doing the following:
\begin{itemize}
\item If $\psi(n)$ gets defined by following $\Parity(\varphi_{f(n)}(n))$
      then $\vartheta_n(n) = \varphi_{f(n)}(n)$;
\item If $\psi(n)$ gets defined by taking $0$ due to
      $c_\Omega(f(n)) \geq n$ then $\vartheta_n(n) = 0$;
\item If $\psi(n)$ does not get defined then $\vartheta_n(n)$
      remains undefined.
\end{itemize}
Now consider the $K$-recursive function $h$ given as follows:
$h(e)$ is the first $n$ such that $f(n) = e$ and
$\vartheta_{f(n)}(n) \simeq \varphi_e(n)$ --- the halting problem
$K$ allows us to check this. The construction gives that such an index
is always found and therefore $\vartheta_{h(e)} = \varphi_e$.
A numbering with such a $K$-recursive translation function is
called a $K$-acceptable numbering.
Furthermore, the mapping $n \mapsto 1-A(n)$ witnesses that
$A$ is $\DNR^\vartheta$. Thus $A$ satisfies the conditions
from \cite[Question 4.1(8)]{Brendle.Brooke.ea:14}
with $\DNR^{\vartheta}$ in place
of $\DNR$; this does not answer the original question, as
$\DNR^\vartheta$ is weaker than $\DNR$.

\begin{cor}
For some $K$-acceptable numbering $\vartheta$,
there is a $\DNR^\vartheta$, Schnorr trivial and hyperimmune-free oracle $A$;
such an $A$ neither computes a Schnorr random nor is weakly Schnorr covering.
\end{cor}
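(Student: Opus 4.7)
The plan is to assemble the corollary from the construction carried out in the discussion just above, together with Theorem~\ref{separation2}. Take $\psi$, $f$ and $A$ as produced there, and let $\vartheta$ be the numbering just defined.

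The first task is to verify that $\vartheta$ is $K$-acceptable via the translation $h(e)=$ least $n$ with $f(n)=e$ and $\vartheta_n(n)\simeq\varphi_e(n)$. Three things have to be checked: $h$ is $K$-recursive (using $K$ one decides convergence of $\varphi_e(n)$ and of $\vartheta_n(n)$ and hence compares the two); $h$ is total (when $\varphi_e$ is total and $\{0,1\}$-valued the construction of $f$ yields cofinitely many $n$ with $f(n)=e$ on which the first clause of the definition of $\psi(n)$ applies, so $\vartheta_n(n)=\varphi_{f(n)}(n)=\varphi_e(n)$; when $\varphi_e$ is partial, the unique $n$ with $f(n)=e$ on which $\psi$ stays undefined also has $\vartheta_n(n)$ and $\varphi_e(n)$ both undefined, so the $\simeq$-equality still holds); and $\vartheta_{h(e)}=\varphi_e$ (at coordinates $m\ne h(e)$ this is the definition $\vartheta_n(m)=\varphi_{f(n)}(m)$, and at $m=h(e)$ it is exactly what was just verified).

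The second task is to show $A\in\DNR^\vartheta$ via the $A$-recursive function $n\mapsto 1-A(n)$. This reduces to checking that $A(n)=\vartheta_n(n)$ whenever $\vartheta_n(n)$ converges, which in the three clauses of the definition gives respectively $\Parity(\varphi_{f(n)}(n))=\psi(n)=A(n)$, $0=\psi(n)=A(n)$, and a vacuous condition. The remaining properties of $A$, namely being Schnorr trivial, of hyperimmune-free degree, and not weakly Schnorr covering, were already established in Theorem~\ref{separation2}. That $A$ computes no Schnorr random set is then automatic: any $B\leq_T A$ is in fact $B\leq_{tt}A$ by Proposition~\ref{prop:tt}, and the Franklin--Stephan characterisation of Schnorr triviality confines the initial segments of $B$ to uniformly recursive lists of size $n+1$, which is incompatible with $B$ being Schnorr random.

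The main obstacle in this plan is the bookkeeping for $K$-acceptability, specifically ensuring $h$ is total on indices $e$ with $\varphi_e$ partial. The construction of $f$ arranges that there is a single problematic $n$ with $f(n)=e$ on which both $\psi(n)$ and $\varphi_e(n)$ diverge, and the definition of $\vartheta_n(n)$ precisely mirrors the behaviour of $\psi$ at that point, so the $\simeq$-equality is preserved and the search defining $h(e)$ succeeds.
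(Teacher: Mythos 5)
Your proposal follows the same route as the paper: take $f$, $\psi$ and $A$ from Theorem~\ref{separation2}, define $\vartheta$ by copying $\varphi_{f(n)}$ off the diagonal and mirroring the definition process of $\psi(n)$ on the diagonal, obtain $K$-acceptability via the translation $h$, get $\DNR^\vartheta$ from $n\mapsto 1-A(n)$, and quote Theorem~\ref{separation2} (plus Proposition~\ref{prop:tt} and the Franklin--Stephan characterisation) for the remaining properties. Two of your verifications, however, are stated incorrectly and need repair. In the $\DNR^\vartheta$ check you reduce the task to ``$A(n)=\vartheta_n(n)$ whenever $\vartheta_n(n)$ converges''. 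That is false in general: in the first clause $\vartheta_n(n)=\varphi_{f(n)}(n)$, which need not be $\{0,1\}$-valued, whereas $A(n)=\psi(n)=\Parity(\varphi_{f(n)}(n))$. What is true, and what suffices, is that $1-A(n)$ and $\vartheta_n(n)$ differ whenever the latter is defined: in the first clause $1-A(n)=1-\Parity(\vartheta_n(n))$ has the opposite parity of $\vartheta_n(n)$, and in the second clause $1-A(n)=1\neq 0=\vartheta_n(n)$.

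Your totality argument for $h$ also uses the wrong case distinction. You split into ``$\varphi_e$ total and $\{0,1\}$-valued'' versus ``$\varphi_e$ partial''; this omits total $\varphi_e$ that are not $\{0,1\}$-valued, and in the partial case it rests on a false premise, since a partial $\varphi_e$ may converge at every $n$ with $f(n)=e$, in which case $f^{-1}(e)$ is infinite, $\psi$ is defined on all of it, and there is no ``unique $n$ on which $\psi$ stays undefined'' (also, ``cofinitely many $n$ with $f(n)=e$'' should be ``infinitely many''). The relevant dichotomy, coming from the construction of $f$, is whether $f^{-1}(e)$ is finite or infinite: if infinite, then for any $n\in f^{-1}(e)$ with $n>c_\Omega(e)$ the second search cannot halt, so $\psi(n)$, being defined, was set by the first clause and $\vartheta_n(n)=\varphi_e(n)$; if finite, the largest $n\in f^{-1}(e)$ has $\psi(n)$, $\vartheta_n(n)$ and $\varphi_e(n)$ all undefined, so $\vartheta_n(n)\simeq\varphi_e(n)$ holds there. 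With these two repairs your argument coincides with the paper's.
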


\noindent
We can now extend the picture of Proposition~\ref{implications}
to the following.

\begin{thm}
We have the following picture of implications:
$$
\begin{array}{ccccc}
&& \mbox{$A$ is high} & \Rightarrow & \mbox{$A$ has hyperimmune} \\
&&&& \mbox{degree} \\
&& \raisebox{0pt}[16pt][10pt]{$\Downarrow$}  
&& \raisebox{0pt}[16pt][10pt]{$\Downarrow$}   \\
\mbox{$A$ is PA-complete} & \Rightarrow &
\COMP \mbox{ is $A$-subuniform} & \Rightarrow & \mbox{$\COMP$ is i.o.} \\
&&&& \mbox{$A$-subuniform} \\
&& \raisebox{0pt}[16pt][10pt]{$\Downarrow$}  
&& \raisebox{0pt}[16pt][10pt]{$\Downarrow$}   \\
\mbox{$A$ has hyperimmune} & \Rightarrow & \mbox{$A$ Schnorr} &
  \Rightarrow & \mbox{$A$ covers $\COMP$} \\
\mbox{degree} && \mbox{covers $\COMP$} && \\
&&&& \raisebox{0pt}[16pt][10pt]{$\Downarrow$}   \\
&&&& \mbox{$A$ is nonrecursive} 
\end{array}
$$
\noindent
No other implications hold besides the ones indicated; note that for
having a clean graphical presentation, the notion
``$A$ has hyperimmune degree'' has two entries.
\end{thm}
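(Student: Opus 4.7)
The plan is to verify each displayed arrow and then, for each non-implication, exhibit an oracle witnessing the failure. Most arrows are already in the paper: from Theorem~\ref{Jockusch} and Proposition~\ref{implications} we inherit ``$A$ is high $\Rightarrow$ $A$ has hyperimmune degree'', ``$A$ is high $\Rightarrow$ $\COMP$ is $A$-subuniform'', ``$A$ is PA-complete $\Rightarrow$ $\COMP$ is $A$-subuniform'', and ``$A$ has hyperimmune degree $\Rightarrow$ $\COMP$ is i.o.\ $A$-subuniform''. The arrow ``$\COMP$ is $A$-subuniform $\Rightarrow$ $\COMP$ is i.o.\ $A$-subuniform'' is immediate, ``$\COMP$ is i.o.\ $A$-subuniform $\Rightarrow$ $A$ covers $\COMP$'' is Proposition~\ref{iosubmeasure0}, ``$A$ Schnorr covers $\COMP$ $\Rightarrow$ $A$ covers $\COMP$'' is immediate, and ``$A$ covers $\COMP$ $\Rightarrow$ $A$ is nonrecursive'' holds because $\COMP$ has positive recursive measure. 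The step ``$\COMP$ is $A$-subuniform $\Rightarrow$ $A$ Schnorr covers $\COMP$'' follows by combining the martingales $M_n$ that bet half on each $P_n(x)$ into $M = \sum_n 2^{-n-1} M_n$: each $M_n$ grows as $(3/2)^k$ on its associated recursive set, giving an $A$-recursive Schnorr-success rate. The implication ``$A$ has hyperimmune degree $\Rightarrow$ $A$ Schnorr covers $\COMP$'' is the most delicate of the positive implications; it is obtained by forming the same martingale from the i.o.\ parametrization $P_e(x) = \varphi_{e,f(x)}(x)$ derived from a non-dominated $f \leq_T A$, with an $A$-recursive success bound scheduled across indices~$e$.

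The non-implications are handled by counterexamples drawn from the rest of the paper. For the upper part of the diagram, the discussion in the proof of Proposition~\ref{implications} already exhibits the witnesses: a PA-complete set of hyperimmune-free Turing degree shows that the left column does not entail anything in the right column; a high incomplete r.e.\ set shows that the right column does not entail ``$A$ is PA-complete''; and an r.e.\ set of hyperimmune degree that is not high shows that ``$A$ has hyperimmune degree'' and ``$\COMP$ is i.o.\ $A$-subuniform'' do not imply ``$\COMP$ is $A$-subuniform''. For the lower part, Theorem~\ref{separation} gives $A$ that Schnorr covers $\COMP$ but for which $\COMP$ is not i.o.\ $A$-subuniform, which handles both ``$A$ Schnorr covers $\COMP$ $\not\Rightarrow$ $\COMP$ is i.o.\ $A$-subuniform'' and ``$A$ covers $\COMP$ $\not\Rightarrow$ $\COMP$ is i.o.\ $A$-subuniform'' (via ``Schnorr covers $\Rightarrow$ covers''); Theorem~\ref{separation2} gives $A$ for which $\COMP$ is i.o.\ $A$-subuniform but $A$ does not Schnorr cover $\COMP$, handling ``$\COMP$ is i.o.\ $A$-subuniform $\not\Rightarrow$ $A$ Schnorr covers $\COMP$'' and ``$A$ covers $\COMP$ $\not\Rightarrow$ $A$ Schnorr covers $\COMP$''; and Theorem~\ref{FS} furnishes a Martin-L\"of random of hyperimmune-free Turing degree that does not cover $\COMP$, handling ``$A$ is nonrecursive $\not\Rightarrow$ $A$ covers $\COMP$''. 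Combining ``PA-complete $\Rightarrow$ Schnorr covers'' with the existence of PA-complete sets of hyperimmune-free degree additionally gives ``$A$ Schnorr covers $\COMP$ $\not\Rightarrow$ $A$ has hyperimmune degree''.

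The main obstacle is not any individual verification, each of which is either immediate or reduces to a pointer to an earlier result, but rather the bookkeeping: one must enumerate all pairs of nodes, check which are connected by a directed path in the displayed diagram, and confirm that every unconnected pair is separated by one of the oracles listed above. The genuinely new mathematical content, as signalled in the paragraph following Theorem~\ref{separation2}, is the incomparability of ``$\COMP$ is i.o.\ $A$-subuniform'' and ``$A$ Schnorr covers $\COMP$'', which is precisely what Theorems~\ref{separation} and~\ref{separation2} provide.
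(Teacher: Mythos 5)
Your overall plan coincides with the paper's own proof: the upper square and the r.e.\ counterexamples come from Theorem~\ref{Jockusch} and Proposition~\ref{implications}, the arrow from i.o.\ subuniformity to covering is Proposition~\ref{iosubmeasure0}, the incomparability of the two middle notions is Theorems~\ref{separation} and~\ref{separation2}, the bottom non-implication is Theorem~\ref{FS}, and the PA-complete set of hyperimmune-free degree disposes of all arrows back into the top row, exactly as in the paper. Your explicit martingale for ``$\COMP$ is $A$-subuniform $\Rightarrow$ $A$ Schnorr covers $\COMP$'' (which the paper leaves implicit) is fine: there the rows are total, the capital grows by a factor $3/2$ at every step of the relevant row, so a single recursive order such as $(5/4)^k$ witnesses Schnorr success.

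The gap is in the step you yourself flag as the most delicate: ``$A$ has hyperimmune degree $\Rightarrow$ $A$ Schnorr covers $\COMP$''. From the pointwise parametrisation $P_e(x)=\varphi_{e,f(x)}(x)$ the combined martingale does succeed on every recursive set, but that only re-proves plain covering, i.e.\ Proposition~\ref{iosubmeasure0}. Schnorr covering requires one $A$-recursive order $h$ with $M(B\restr n)\geq h(n)$ infinitely often for \emph{every} recursive $B$, and the positions where $\varphi_{e,f(x)}(x)$ converges within $f(x)$ steps come with no $A$-recursive schedule: for a total $\varphi_e$ the count $c_e(n)$ of correct predictions below $n$ is unbounded but can grow arbitrarily slowly, and the rows belonging to partial $\varphi_e$, which $A$ cannot recognise, frustrate any attempt to define $h$ as a minimum over rows. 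Saying the success bound is ``scheduled across indices $e$'' does not meet this difficulty, because the weights $2^{-e-1}$ only control the index, not the location of the agreements; the whole point of the distinction between covering and Schnorr covering (and of Theorems~\ref{separation} and~\ref{separation2}) is that unscheduled infinitely-often agreement is weaker. The known repair is to use the non-dominated $f$ to certify agreement on whole blocks: for total $\varphi_e$ there are infinitely many $n$ such that $\varphi_{e,f(n)}(x)$ converges for all $x\in\{n,n+1,\ldots,3n\}$, and a martingale that bets only inside such fully certified blocks reaches capital at least $2^{-e-1}(3/2)^{2n}$ by position $3n$ on $\varphi_e$, so the fixed recursive order $h(m)=(3/2)^{\lfloor m/3\rfloor}$ works. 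This is precisely the coincidence-on-long-blocks argument for weakly $1$-generic sets, i.e.\ Rupprecht's result, which the paper simply cites at this point; as written, your sketch of this implication is not a proof, although the remaining verifications and counterexamples match the paper's argument.
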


\begin{proof}
The upper part of the diagram was discussed in Proposition~\ref{implications}.
That $\COMP$ i.o.\ $A$-subuniform implies that $A$ covers $\COMP$ is
immediate from Proposition~\ref{iosubmeasure0}.
That $\REC$ i.o.\ $A$-subuniform does not imply that $A$ Schnorr covers $\REC$
was proven in Theorem~\ref{separation}.
That the converse also does not hold was proven in Theorem~\ref{separation2}.

Since by Proposition~\ref{implications}, $A$ is PA-complete does not
imply that $A$ has hyperimmune degree; the same is true for all notions
implied by $A$ being PA-complete, that is, for $\COMP$ being $A$-subuniform,
for $A$ Schnorr covering $\COMP$, for $\COMP$ being i.o.\ $A$-subuniform,
for $A$ covering $\COMP$ and for $A$ being nonrecursive. Rupprecht
\cite{Rupprecht,Rupprechtthesis} proved that sets
of weakly $1$-generic degree -- which
are the same as sets of hyperimmune degree -- are Schnorr covering $\COMP$.

That $A$ nonrecursive does not imply that $A$ covers $\COMP$
follows from Theorem~\ref{FS}. 
\end{proof}

\medskip
\noindent
The following interesting question is still open.

\begin{que} \label{question2}
Are there sets $A$ such that $A$ covers $\COMP$, but not the class $\CE$
of all recursively enumerable sets?
\end{que}

\noindent
Note that this really asks for the class of all recursively enumerable
sets and not the class of all left-r.e.\ sets; if $A$ is low for
Martin-L\"of randomness and nonrecursive then $A$ covers $\COMP$
but fails to cover the left-r.e.\ sets, as $\Omega$ is Martin-L\"of
random relative to $A$.

Hirschfeldt and Terwijn~\cite{HirschfeldtTerwijn} proved that the low
sets do not have $\Delta^0_2$-measure zero in $\Delta^0_2$,
that is, there does not exist a $K$-recursive martingale
that succeeds on all the low sets, where $K$ is the halting-problem.

The reason is that given such an martingale $M^K$, one can consider
the variant $O^K$ which behaves on the bits with index $2k$ like $M^K$ on
the bits with index $k$ and which ignores the bits with indices $2k+1$;
furthermore, let $N^K$ be a martingale which covers on all sets
which are not Martin-L\"of random. The sum of $N^K$ and $O^K$ gives
a $K$-recursive martingale which covers all sets covered by
$O^K$ or $N^K$. However, some $K$-recursive set $A \oplus B$
withstands this sum martingale. Thus $A \oplus B$ is Martin-L\"of random.
By van Lambalgen's Theorem,
the half $A$ of $A \oplus B$ is low and Martin-L\"of random;
as $A$ consists in $A \oplus B$ of the bits with index $2k$,
the construction gives that $M^K$ does not cover $A$.

In particular, the low Martin-L\"of random sets are not i.o.\ $K$-subuniform.
Despite the non-uniformity of the
low sets, Downey, Hirschfeldt, Lempp and Solomon \cite{DHLS} succeeded in
constructing a set in $\Delta^0_2$ that is bi-immune for the low sets.

\bigskip\noindent
{\bf Acknowledgements.}
The authors would like to thank George Barmpalias and Michiel van Lambalgen 
for discussions about Section~\ref{sec:iosub} and
Andr\'e Nies and Benoit Monin for correspondence and thorough
checking of Theorem~\ref{firstStatement}.


\begin{thebibliography}{10}

	\bibitem{Brendle.Brooke.ea:14}
	J{\"o}rg Brendle, Andrew Brooke-Taylor, Keng~Meng Ng, and Andr{\'e} Nies.
	\newblock An analogy between cardinal characteristics and highness properties
	  of oracles.
	\newblock In {\em Proceedings of the 13th {A}sian {L}ogic {C}onference}, pages
	  1--28. World Sci. Publ., Hackensack, NJ, 2015.

\bibitem{Calude} Cristian S.\ Calude.
\newblock {\em Information and Randomness:
    An Algorithmic Perspective}, Second Edition, Revised and Extended,
\newblock Springer-Verlag, Berlin, 2002.

\bibitem{CaludeNies} Cristian S.\ Calude and Andr\'e Nies.
\newblock Chaitin $\Omega$ numbers and strong reducibilities.
\newblock {\em Journal of Universal Computer Science}, 3(11):1162--1166, 1997.

\bibitem{DemuthKucera} Oswald~Demuth and Anton\'in~Ku\v cera.
\newblock Remarks on $1$-genericity, semigenericity and related concepts.
\newblock {\em Commentationes Mathematicae Universitatis Carolinae},
28(1):85--94, 1987.

\bibitem{DHLS}
Rodney G.\ Downey, Denis~R.\ Hirschfeldt, Steffen Lempp and Reed Solomon.
\newblock A {$\Delta^0_2$} set with no infinite low subset in either it or its
complement.
\newblock {\em The Journal of Symbolic Logic}, 66(3):1371--1381, 2001.

\bibitem{DH} Rodney~G.\ Downey and Denis~R.\ Hirschfeldt.
\newblock {\em Algorithmic Randomness and Complexity}.
\newblock Theory and Applications of Computability. Springer, New York, 2010.

\bibitem{Fr08} Johanna Franklin.
\newblock Hyperimmune-free degrees and Schnorr triviality.
\newblock {\em The Journal of Symbolic Logic}, 73:999--1008, 2008.

\bibitem{FS10} Johanna Franklin and Frank Stephan.
\newblock Schnorr trivial sets and truth-table reducibility.
\newblock {\em The Journal of Symbolic Logic}, 75:501--521, 2010.

\bibitem{HirschfeldtTerwijn} Denis~R.\ Hirschfeldt and Sebastiaan~A.\ Terwijn.
\newblock Limit computability and constructive measure.
\newblock In {\em Computational Prospects of Infinity II: Presented
Talks}, volume~15 of {\em Lecture Notes Series, Instute for Mathematical
Sciences, National University of Singapore},
pages 131--141. World Scientific Publishing Co.\ Pte.\ Ltd.,
Hackensack, NJ, 2008.

\bibitem{Khan.Miller:14} Mushfeq Khan and Joseph~S.\ Miller.
\newblock Forcing with bushy trees.
\newblock Manuscript, 2014.

\bibitem{KMS} Bj\o rn Kjos-Hanssen, Wolfgang Merkle and Frank Stephan.
\newblock Kolmogorov complexity and the recursion theorem.
\newblock {\em Transactions of the American Mathematical Society},
363(10):5465--5480, 2011.

\bibitem{KNS} Bj\o rn Kjos-Hanssen, Andr\'e Nies and Frank Stephan.
\newblock Lowness for the class of Schnorr random reals.
\newblock {\em SIAM Journal on Computing},
35(3):647--657, 2005.

\bibitem{Kurtz} Stuart~Alan Kurtz.
\newblock {\em Randomness and Genericity in the Degrees of Unsolvability.}
\newblock ProQuest LLC, Ann Arbor, MI, 1981.
\newblock Thesis (Ph.D.) -- University of Illinois at Urbana-Champaign.

\bibitem{LV} Ming Li and Paul Vit\'anyi.
\newblock {\em An Introduction to Kolmogorov Complexity and Its Applications}.
\newblock Third Edition, Springer, Heidelberg, 2008.

\bibitem{MillerMartin} Webb Miller and Donald~A.\ Martin.
\newblock The degrees of hyperimmune sets.
\newblock {\em Zeitschrift f\"ur Mathematische Logik und Grundlagen der
Mathematik}, 14:159--166, 1968.

\bibitem{Ni09} Andr\'e Nies.
\newblock {\em Computability and Randomness}.
\newblock Oxford Science Publications, 2009.

\bibitem{Odifreddi} Piergiorgio Odifreddi.
\newblock {\em Classical Recursion Theory}, volume 125 of {\em Studies in Logic
and the Foundations of Mathematics}.
\newblock North-Holland Publishing Co., Amsterdam, 1989.

\bibitem{OdifreddiII} Piergiorgio Odifreddi.
\newblock {\em Classical Recursion Theory II}, volume 143 of {\em
Studies in Logic and the Foundations of Mathematics}.
\newblock North-Holland Publishing Co., Amsterdam, 1999.

\bibitem{Rupprecht} Nicholas Rupprecht.
\newblock Relativized Schnorr tests with universal behavior.
\newblock {\em Archive for Mathematical Logic}, 49(5):555--570, 2010.

\bibitem{Rupprechtthesis} Nicholas Rupprecht.
\newblock {\em Effective correspondents to carcindal characteristics
  in Chichon's diagram}.
\newblock PhD Thesis, University of Michigan, 2010.

\bibitem{Soare} Robert~I.\ Soare.
\newblock {\em Recursively Enumerable Sets and Degrees}.
\newblock Perspectives in Mathematical Logic. Springer-Verlag, Berlin, 1987.
\newblock A study of computable functions and computably generated sets.

\bibitem{Terwijn1999} Sebastiaan~A.\ Terwijn.
\newblock On the quantitative structure of {$\Delta_2^0$}.
\newblock In {\em Reuniting the antipodes---constructive and nonstandard views
of the continuum (Venice, 1999)}, volume 306 of {\em Synthese Library}, pages
271--283. Kluwer Academic Publishers, Dordrecht, 2001.

\bibitem{TZ01} Sebastiaan A.\ Terwijn and Domenico Zambella.
\newblock Algorithmic randomness and lowness.
\newblock {\em The Journal of Symbolic Logic}, 66:1199--1205, 2001.

\end{thebibliography}
\end{document}